\documentclass[11pt]{article}

\usepackage[margin=1in]{geometry}
\usepackage[T1]{fontenc}
\usepackage[utf8]{inputenc}
\usepackage{lmodern}
\usepackage{microtype}
\usepackage{amsmath,amssymb,amsthm,mathtools,bm,mathrsfs}
\usepackage{booktabs,array,longtable,tabularx}
\usepackage{enumitem}
\usepackage{xcolor}
\usepackage{hyperref}
\usepackage[nameinlink,capitalize]{cleveref}
\usepackage[numbers,sort&compress]{natbib}

\hypersetup{
 hidelinks,
 pdftitle={Matching Lower Bounds for Randomized First-Order Methods in Hessian-Lipschitz Nonconvex Optimization},
 pdfauthor={Anonymous Authors}
}
\newtheorem{theorem}{Theorem}[section]
\newtheorem{proposition}{Proposition}[section]
\newtheorem{lemma}{Lemma}[section]
\newtheorem{corollary}{Corollary}[section]

\theoremstyle{definition}
\newtheorem{definition}{Definition}[section]
\newtheorem{construction}{Construction}[section]

\theoremstyle{remark}

\newcommand{\R}{\mathbb{R}}

\newcommand{\Prob}{\mathbb{P}}
\newcommand{\eps}{\varepsilon}
\newcommand{\Lip}{\operatorname{Lip}}
\newcommand{\spanop}{\operatorname{span}}
\newcommand{\diag}{\operatorname{diag}}

\newcommand{\FO}{\mathrm{FO}}
\newcommand{\rand}{\mathrm{rand}}

\newcommand{\cF}{\mathcal{F}}

\newcommand{\cK}{\mathcal{K}}
\newcommand{\cV}{\mathcal{V}}

\newcommand{\cL}{\mathcal{L}}
\newcommand{\Q}{\mathsf{Q}}
\newcommand{\op}{\mathrm{op}}
\newcommand{\1}{\mathbf{1}}
\newcommand{\good}{\mathsf{Good}}

\title{\textbf{Matching Lower Bounds for Randomized First-Order Methods in Hessian-Lipschitz Nonconvex Optimization}}
\newcommand{\authornames}{Haihan Zhang, Wendao Wu, Chenheng Zhang, Yanyi Li, Chunyuan Zheng, Cong Fang, Haoxuan Li, Zhouchen Lin}
\newcommand{\affiliation}{Peking University}
\newcommand{\emailA}{colombo.2026@outlook.com}

\newcommand{\mail}[1]{\href{mailto:#1}{\texttt{#1}}}
\newif\ifanonymous
\anonymousfalse
\ifanonymous
\author{Anonymous Authors}
\hypersetup{pdfauthor={Anonymous Authors}}
\else
\author{Haihan Zhang$^{1,*}$\quad Wendao Wu$^{1,*}$\quad Chenheng Zhang$^{1,*}$\\[0.20em]
Yanyi Li$^1$\quad Chunyuan Zheng$^1$\\[0.20em]
Cong Fang$^{1,\dagger}$\quad Haoxuan Li$^{1,\dagger}$\quad Zhouchen Lin$^{1,\dagger}$\\[0.60em]
\small $^1$\affiliation\\[0.35em]
\footnotesize\mail{\emailA} \\[0.35em]
\small $^*$Equal contribution.\quad $^\dagger$Corresponding authors.}
\hypersetup{pdfauthor={\authornames}}
\fi
\date{}
\newcommand{\ResearchAgentSystem}{our laboratory's internal auto-research system}

\begin{document}
\maketitle

\begin{abstract}
We establish a randomized first-order lower bound for finding an $\eps$-stationary point, $\|\nabla f(x)\|\le\eps$, of a nonconvex function with initial gap at most $\Delta$, $L_1$-Lipschitz gradient, and $L_2$-Lipschitz Hessian. Each oracle call returns the exact function value and gradient. Let $\Q_{\rand,\FO}^{\infty}(\eps;\Delta,L_1,L_2)$ denote the minimax number of calls, maximized over finite dimensions, for arbitrary adaptive randomized algorithms with per-instance success probability at least $2/3$. In the regime $\eps\lesssim L_1^2/L_2$ and $\Delta L_2^{1/2}\eps^{-3/2}\gtrsim1$, we prove
\[
 \Q_{\rand,\FO}^{\infty}(\eps;\Delta,L_1,L_2)
 \ge c\,\Delta L_1^{1/2}L_2^{1/4}\eps^{-7/4},
\]
where $c>0$ is an absolute constant. This matches the deterministic restarted accelerated-gradient upper bound of Li and Lin \citep{LiLin2023JMLR} and extends the sharp deterministic lower bound of Zhou \citep{Zhou2026Sharp} to unrestricted randomized algorithms. Thus randomization does not improve the high-dimensional worst-case query rate. The construction keeps quadratic curvature visible while revealing the forcing directions sequentially. Large eigenspaces limit preprocessing, and a scheduled-disclosure coupling makes each fresh direction incur a separate cost.
\end{abstract}
\noindent\textbf{Keywords:} Nonconvex optimization; randomized oracle complexity; first-order lower bounds; Lipschitz Hessians.

\medskip
{\small
\begin{samepage}
\noindent\textbf{AI Usage.}
Nearly the entire research pipeline for this paper was carried out by Colombo,
\ResearchAgentSystem{}, powered by GPT-5.6 Sol. The system also conducted a
Lean-backed article audit of the resulting manuscript. The authors subsequently reviewed and approved the
mathematical claims, presentation, and formal artifacts, and take
responsibility for the final manuscript. The complete Lean audit report and the system's technical report will be made public at a later date.
\par
\end{samepage}
}

\paragraph{Publication Intent and Recognition of Independent Work.} 
This preprint is released as a research output of our study of
automated scientific research, for public scrutiny and discussion.
As we do not intend to submit this manuscript to a journal or
conference, it has not undergone the full editorial refinement
and internal review we would normally require for submission.
The authors remain responsible for its contents and for correcting
any errors or inaccurate attribution.

We recognize that releasing these results may affect researchers
who have independently been pursuing the same questions, including
work that is not yet public.
If this applies to your work, we invite you to contact us at
\texttt{colombo.2026@outlook.com}.
A brief description of the overlap, together with any dated
materials you are comfortable sharing, would help us understand
its scope and chronology.
Such materials might include draft excerpts, presentation slides,
or revision records; a complete manuscript or unpublished proof
is not required to begin the conversation.

We will review these cases carefully and update citations,
acknowledgments, and accounts of independent or concurrent work
as appropriate, including revising priority claims where warranted.
Nonpublic materials will be treated confidentially and used only
to assess the overlap unless their authors authorize otherwise.
We will obtain explicit permission from the authors before
publicly discussing such materials or providing them to an
automated research system.

\section{Introduction}

We study the number of exact function-value and gradient queries needed to find a point $x$ with $\|\nabla f(x)\|\le\eps$ for a smooth nonconvex function $f:\R^d\to\R$. The function has initial gap $f(0)-\inf f\le\Delta$, an $L_1$-Lipschitz gradient, and an $L_2$-Lipschitz Hessian, where $\Delta,L_1,L_2>0$. Hessian regularity constrains the objective, but the algorithm receives only first-order information. Our question is whether randomization reduces the worst-case number of queries in this setting.

With only a Lipschitz gradient, the classical gradient-descent rate is $O(\Delta L_1\eps^{-2})$, and its accuracy exponent is optimal in the unrestricted oracle model \citep{CDHS2020I}. Additional Hessian regularity allows acceleration without Hessian queries. The ``convex until proven guilty'' method obtains the $\eps^{-7/4}$ accuracy dependence with a logarithmic factor \citep{Carmon2017Convex}. Li and Lin remove that factor with restarted accelerated gradient \citep{LiLin2022Restarted,LiLin2023JMLR}, giving
\begin{equation}\label{eq:upper-intro}
 O\!\left(\Delta L_1^{1/2}L_2^{1/4}\eps^{-7/4}\right)
\end{equation}
first-order calls in the high-accuracy regime. This upper bound is deterministic and is therefore available to randomized algorithms as well.

The corresponding lower bounds have distinguished deterministic from randomized algorithms. Carmon et al.\ establish a deterministic $\Omega(\eps^{-12/7})$ lower bound under gradient and Hessian Lipschitzness \citep{CDHS2021II}. Zhou strengthens it to the sharp $\Omega(\eps^{-7/4})$ rate \citep{Zhou2026Sharp}. That deterministic conclusion does not exclude a faster randomized policy: an algorithm-dependent resisting rotation does not supply a hard distribution independent of the policy's private random seed.

We establish the matching randomized lower bound, with the full parameter dependence in \eqref{eq:upper-intro}. The new contribution is the necessity of this cost for arbitrary randomized first-order queries; the upper rate is inherited from Li and Lin. The hard distribution is fixed before the algorithm runs and consists entirely of functions in the prescribed smoothness class. It forces many quadratic systems to be solved sequentially, even though each query can explore every block matrix at once. In particular, every randomized method with per-instance success probability at least $2/3$ requires $\Omega(\Delta L_1^{1/2}L_2^{1/4}\eps^{-7/4})$ calls in sufficiently large dimension, when $\eps\lesssim L_1^2/L_2$ and $\Delta L_2^{1/2}\eps^{-3/2}\gtrsim1$. \Cref{tab:related} summarizes this extension; \Cref{thm:main} gives the formal statement.

\begin{table}[!htbp]
\centering
\small
\setlength{\belowcaptionskip}{8pt}
\caption{First-order query bounds for Hessian-Lipschitz nonconvex stationarity. Our contribution extends Zhou's deterministic lower bound to unrestricted randomized algorithms at the same rate, matching the existing Li--Lin upper bound. Rates fix $\Delta,L_1,L_2$ and take $\eps$ sufficiently small; lower bounds allow sufficiently large finite dimension, with randomized success probability at least $2/3$ on every instance. \Cref{thm:main} gives the full parameter dependence and regime.}
\label{tab:related}
\renewcommand{\arraystretch}{1.18}
\setlength{\tabcolsep}{4pt}
\begin{tabularx}{\linewidth}{@{}>{\raggedright\arraybackslash}X c c >{\raggedright\arraybackslash}p{0.21\linewidth}@{}}
\toprule
Work & Upper bound & Lower bound & Algorithms \\
\midrule
\multicolumn{4}{@{}l}{\textit{Established upper bounds}} \\
Carmon et al.\ \citep{Carmon2017Convex} & $O(\eps^{-7/4}\log(1/\eps))$ & --- & Deterministic \\
Li--Lin \citep{LiLin2023JMLR} & $O(\eps^{-7/4})$ & --- & Deterministic \\
\midrule
\multicolumn{4}{@{}l}{\textit{Lower bounds: from deterministic to randomized algorithms}} \\
Carmon et al.\ \citep{CDHS2021II} & --- & $\Omega(\eps^{-12/7})$ & Deterministic \\
Zhou \citep{Zhou2026Sharp} & --- & $\Omega(\eps^{-7/4})$ & Deterministic \\
\textbf{This Paper} (\Cref{thm:main}) & --- & $\Omega(\eps^{-7/4})$ & \textbf{Unrestricted randomized} \\
\bottomrule
\end{tabularx}
\end{table}

\paragraph{Proof idea.}
The construction leaves every quadratic block matrix accessible from the start but reveals the forcing directions sequentially. Large eigenspaces ensure that preprocessing a future matrix cannot substitute for learning its fresh forcing direction. A scheduled oracle reveals each forcing direction early and later discloses its full rotation; a coupling proves that even this additional information cannot complete the cascade quickly. Positive forcing makes completion necessary for stationarity. These ingredients make the per-stage information costs accumulate despite simultaneous access to all block matrices. The formal model and lower bound appear in \Cref{sec:model-results}; \Cref{sec:proof-strategy} explains the construction and its parameter scaling.

\section{Related Work}

\paragraph{Acceleration with first-order information.}
Higher-order smoothness can improve first-order query complexity without changing the oracle. Carmon et al.\ use accelerated gradient steps together with a certificate of nonconvexity to obtain a logarithmically modified $\eps^{-7/4}$ rate; under Lipschitz third derivatives, their method also gives a $\eps^{-5/3}$ rate up to logarithms \citep{Carmon2017Convex}. Li and Lin's restarted AGD and heavy-ball schemes obtain the Hessian-Lipschitz $\eps^{-7/4}$ rate without a polylogarithmic loss \citep{LiLin2022Restarted,LiLin2023JMLR}. Their restart analysis supplies the upper benchmark here. We do not propose a faster method; we study whether a randomized policy can improve on that benchmark.

\paragraph{Nonconvex first-order lower bounds.}
The zero-chain construction of Carmon et al.\ establishes a deterministic $\eps^{-12/7}$ lower bound with Lipschitz gradients and Hessians \citep{CDHS2021II}. Zhou's block-chain construction attains the deterministic $\eps^{-7/4}$ rate and treats higher finite smoothness orders \citep{Zhou2026Sharp}. The present construction addresses the additional requirement of randomization: one distribution must be hard for every deterministic policy, rather than selecting a rotation after fixing the policy. This requires controlling arbitrary preprocessing of future blocks and the vector information carried by gate gradients after activation.

\paragraph{Stronger derivative access.}
Hessian regularity and Hessian access lead to different complexity questions. The all-derivative lower bound of Carmon et al.\ has the Hessian-Lipschitz accuracy exponent $3/2$ \citep{CDHS2020I}. Since that oracle supplies more information, its lower bound also constrains first-order algorithms, but it does not establish the sharper first-order exponent $7/4$. Carmon et al.\ \citep{CarmonDuchiHinderSidford2018Accelerated} also analyze acceleration using gradient and Hessian-vector computations and obtain a second-order stationarity guarantee. Their Hessian-free terminology does not mean that a full Hessian is returned at each call; oracle accounting and the stronger output criterion must be specified when comparing that framework with the present exact value-gradient model.

\paragraph{Random rotations and linear-system complexity.}
Randomized quadratic lower bounds show how hidden spectral information constrains adaptive optimization \citep{Simchowitz2018Quadratic}. Derezi\'nski, Epperly, and Meyer establish matrix-vector lower bounds for linear systems \citep{DerezinskiEpperlyMeyer2026}; Shamir gives a direct random-rotation proof and a reduction to strongly convex quadratic optimization \citep{Shamir2026Simple}. Adaptive-to-Krylov reductions also appear in sampling lower bounds \citep{ChewiEtAl2023Sampling}. These results motivate the per-block information argument, but a single difficult system supplies only one stage of the desired nonconvex lower bound. Our scheduled reduction inserts a fresh right-hand side at a fixed round and discloses the whole block rotation after a prescribed delay. Large spectral multiplicities prevent earlier matrix queries from eliminating that delay; full disclosure makes subsequent gate gradients exactly computable.

\section{Problem Setup and Main Result}\label{sec:model-results}

We first specify the function class and the randomized query model. The main result extends the deterministic lower bound to arbitrary randomized policies under the same smoothness assumptions.

\subsection{Function class and oracle model}

For $d\in\mathbb N$, let
\begin{equation}\label{eq:function-class}
\cF_{1:2}^{(d)}(\Delta,L_1,L_2)
:=\left\{ f\in C^2(\R^d):
 f(0)-\inf_x f(x)\le \Delta,
 \Lip(\nabla f)\le L_1,
 \Lip(\nabla^2f)\le L_2
\right\}.
\end{equation}
All norms are Euclidean, with induced operator norms for derivatives. The exact first-order oracle returns $(f(x),\nabla f(x))$ at any $x\in\R^d$. An algorithm knows $d,\Delta,L_1,L_2,\eps$ and may choose arbitrary measurable queries using its private randomness and the complete transcript. Computation between calls is unrestricted, and the output need not be a queried point. For each $d$, let $\Q_{\rand,\FO}^{(d)}$ be the least almost-sure query cap under which some such algorithm outputs an $\eps$-stationary point with probability at least $2/3$ for every $f\in\cF_{1:2}^{(d)}$. Define $\Q_{\rand,\FO}^{\infty}:=\sup_{d\ge1}\Q_{\rand,\FO}^{(d)}$. Probability refers to the algorithm's private randomness on each fixed function.

\subsection{Randomized lower bound}

The following theorem states the new lower bound. The query cap is a worst-case cap, and the success requirement is imposed separately on every function in the class.

\begin{theorem}[Randomized first-order lower bound]\label{thm:main}
There are numerical constants $c,C>0$ such that, whenever
\begin{equation}\label{eq:regime}
  \eps\le c\frac{L_1^2}{L_2},
  \qquad
  \Delta L_2^{1/2}\eps^{-3/2}\ge C,
\end{equation}
one has
\begin{equation}\label{eq:main-lb}
  \Q_{\rand,\FO}^{\infty}(\eps;\Delta,L_1,L_2)
  \ge c\,\Delta L_1^{1/2}L_2^{1/4}\eps^{-7/4}.
\end{equation}
\end{theorem}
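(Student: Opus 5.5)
We will prove \eqref{eq:main-lb} by Yao's principle. We fix a dimension $d$ large enough and construct a single distribution over functions in $\cF_{1:2}^{(d)}(\Delta,L_1,L_2)$, chosen independently of the algorithm. We then show that every deterministic adaptive policy making fewer than $T:=c\,\Delta L_1^{1/2}L_2^{1/4}\eps^{-7/4}$ queries outputs an $\eps$-stationary point with probability below $2/3$ over this distribution. A randomized algorithm is a mixture of deterministic policies, so this contradicts per-instance success $2/3$.

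\textbf{Construction.} The hard instance is a chain of $N$ stages. Each stage $i$ is a quadratic block $\tfrac12\langle y_i,A_iy_i\rangle-\langle b_i,y_i\rangle$ on a subspace of dimension $m\gg N$. The matrix $A_i=U_i\Lambda U_i^\top$ has spectrum in $[\mu,L_1]$, with $\mu\approx(L_2\eps)^{1/2}$, and large eigenspace multiplicities. The rotation $U_i$ is Haar-random and independent across stages. The forcing direction $b_i$ is switched on by a smooth gate depending on the progress of stage $i-1$. The scaling is chosen so that:
\begin{itemize}
\item each stage needs about $\kappa^{1/2}=(L_1/\mu)^{1/2}\asymp L_1^{1/2}(L_2\eps)^{-1/4}$ matrix-vector products to solve to accuracy $\eps$, as in the Krylov/linear-system lower bounds of \citep{Simchowitz2018Quadratic,Shamir2026Simple};
\item each stage decreases the function by about $\eps^{3/2}L_2^{-1/2}$, so that $N\asymp\Delta L_2^{1/2}\eps^{-3/2}$ stages fit into the gap $\Delta$.
\end{itemize}
The product $N\kappa^{1/2}$ is exactly the target rate. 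The regime conditions \eqref{eq:regime} ensure $\kappa\ge1$ and $N\ge1$. Smoothness will be checked as follows. The gate and scaling are rescaled so that $\Lip(\nabla f)\le L_1$ and $\Lip(\nabla^2f)\le L_2$; the third-derivative budget is exactly what forces $\mu\sim(L_2\eps)^{1/2}$. We will also verify that any point with $\|\nabla f\|\le\eps$ must have every stage activated and solved. Positive forcing ensures an uncompleted stage leaves a gradient component of size above $\eps$.

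\textbf{Information argument.} We replace the true oracle with a more informative \emph{scheduled} oracle. At a fixed round it reveals $b_i$. After a delay of $\asymp\kappa^{1/2}$ further queries it reveals $U_i$ entirely. A coupling shows that the true transcript is a measurable function of the scheduled transcript with high probability. Lower bounds for the scheduled oracle therefore transfer to the true oracle.

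Under the scheduled oracle, we argue stage by stage. Before $b_i$ is revealed, queries touching block $i$ interact only with $A_i$. Because the eigenspaces have multiplicity $m\gg T$, any polynomially many matrix-vector products leave $U_i$ nearly Haar on the complement. Preprocessing therefore cannot produce an approximate solution to the system with the later right-hand side $b_i$. After $b_i$ appears, the randomized linear-system lower bound \citep{DerezinskiEpperlyMeyer2026,Shamir2026Simple} forces $\Omega(\kappa^{1/2})$ queries touching block $i$ before the residual falls below the gate threshold. We then sum over stages with a union bound and concentration of the failure probabilities. This gives total queries at least $\Omega(N\kappa^{1/2})$ with probability $>1/3$ on failure.

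\textbf{Main obstacle.} The hardest step is the coupling and preprocessing control. Each query simultaneously probes all blocks $A_j$, and gate gradients leak vector information once activated. The first attempt is to condition on the revealed information and show that the posterior of $U_i$ restricted to the unexplored part of each eigenspace stays Haar. Rotational invariance in high multiplicity would then bound the projection of any iterate onto the relevant Krylov-deficient directions by $O(\sqrt{T/m})$. The gate must be designed to be exactly flat below its threshold, so that these small projections produce identically zero gate gradients.
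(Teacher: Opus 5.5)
Your information-theoretic architecture matches the paper's: Yao's principle, $N$ stages of $\sqrt{\kappa}$ rounds each, high multiplicity against preprocessing, a fixed-calendar oracle that reveals $b_i$ and later all of $U_i$, a coupling, and exactly flat gates. The gap is in the construction, which is where most of the paper's work lies. Your gate opens when stage $i-1$'s residual falls below a threshold $\theta\lesssim\|b_{i-1}\|\asymp\eps$. Since $\nabla_y\|Ay-b\|$ can have norm $L_1$, the gate input then varies on scale $\eps/L_1$. Your count of $N$ forces a per-stage forcing value $D\asymp\eps^{3/2}L_2^{-1/2}\asymp\eps^2/\mu$. The coupling term then has Hessian of order $L_1^2D/\eps^2=L_1^2/\mu\gg L_1$ and third derivative of order $L_1^3/(\mu\eps)\gg L_2\asymp\mu^2/\eps$. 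This is the same barrier the paper notes for gating curvature.

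The gate input must instead vary on the solution scale $R\asymp\eps/\mu$; that is what makes the third-derivative budget yield $\mu\asymp\sqrt{L_2\eps}$. The paper uses a linear readout $a^\top\mathcal R_B(y)/\alpha_0$ with $\alpha_0\asymp\eps/\mu$. For such a readout, ``the linear-system lower bound'' does not apply off the shelf. You need a direction $a$ with two properties. The exact solution crosses it, $a^\top A^{-1}b\gtrsim\eps/\mu$. Every vector in the degree-$<m$ Krylov space of $b$, plus the preprocessing span, stays below normalized readout $1/4$. Naive choices fail: both $a\propto b$ and $a\propto A^{-1}b$ are crossed by a single query along $b$ of norm $O(R)$.

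The missing idea is the paper's dual-residual readout. First, lower-bound the uniform approximation of $1/\lambda$ by $\mathcal P_{m-1}$ via Markov's inequality. Then pass to a finite spectral design $\nu$ using Sion's minimax theorem and Carath\'eodory. Set $a=r(A)b/\|r(A)b\|$, where $r$ is the $L^2(\nu)$ residual. Take eigenvalue multiplicities proportional to $\nu$, so the empirical weights give both $a^\top A^{-1}b\ge c\eps/\mu$ and $|a^\top p(A)b|\le C\delta\|p(A)b\|$. This is only a relative bound, so the readout also needs the radial squash $\mathcal R_B$. Otherwise large-norm queries amplify small correlations past the threshold.

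Your forcing also needs repair, and this affects your stationarity claim. With $-\chi(\psi(y_{i-1}))\langle b_i,y_i\rangle$, the term $\chi''(\psi)\langle b_i,y_i\rangle\nabla\psi\nabla\psi^\top$ grows linearly in $\|y_i\|$, so $\nabla f$ is not globally Lipschitz. The paper saturates the forcing as $\ell_s=D[C_0+\vartheta(b_s^\top z_s/D)]$, which also gives $\ell_s\ge D>0$. Your sentence about positive forcing needs this positivity, and a linear term does not have it.

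At a partially open gate, block $i-1$'s gradient contains $-\chi'(v)\ell_i(y_i)\nabla v$. With $\ell_i$ of indefinite sign, this can cancel the rest of the gradient and leave $\eps$-stationary points inside the transition region. Even with positivity an argument is required; the paper proceeds as follows.
\begin{enumerate}
\item Project the block gradient onto $A_s^{-1}a_s$ and $A_s^{-1}b_s$, and eliminate the unknown factor $\phi(q_{s+1})\chi'(v)$.
\item Use the moment inequality $kr_0\ge s^2$ to confine $q_s$ to the linear range of $\vartheta$.
\item Use $\phi\ge1$ and $\chi'\ge0$ to get $v_s\ge c_s s_0-1/8$, which propagates activation from block $1$.
\end{enumerate}
Until the gate input and the forcing are specified this way, your sketch does not produce a function in $\cF_{1:2}^{(d)}$ whose $\eps$-stationary points must complete every stage.
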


Combining \Cref{thm:main} with the deterministic restarted accelerated-gradient upper bound of \citet{LiLin2022Restarted,LiLin2023JMLR} yields
\begin{equation}\label{eq:main-theta}
  \Q_{\rand,\FO}^{\infty}(\eps;\Delta,L_1,L_2)
  =\Theta\!\left(\Delta L_1^{1/2}L_2^{1/4}\eps^{-7/4}\right)
\end{equation}
in the regime \eqref{eq:regime}. This matching characterization uses the existing upper bound; the new result is the randomized lower bound in \eqref{eq:main-lb}.

The lower bound is dimension-free in the usual oracle-complexity sense: for each horizon we choose a finite, sufficiently high dimension.  We do not optimize the dimension.  The fixed-dimensional frontier $\Q(d,\eps)$ is a different question.

\section{Proof Strategy and Parameter Calibration}\label{sec:proof-strategy}

\subsection{Why the costs multiply}

The proof of \Cref{thm:main} separates the number of stages from the work required after each new forcing direction becomes available:
\begin{equation}\label{eq:NM-intro}
\underbrace{N\asymp \Delta L_2^{1/2}\eps^{-3/2}}_{\text{sequential fresh right-hand sides}}
\times
\underbrace{M\asymp L_1^{1/2}L_2^{-1/4}\eps^{-1/4}}_{\text{post-revelation Krylov cost}}
=\Delta L_1^{1/2}L_2^{1/4}\eps^{-7/4}.
\end{equation}
The first factor is the length of a nonconvex cascade.  The second is the cost of approximately solving a positive-definite quadratic subproblem with condition number of order $L_1/(L_2\eps)^{1/2}$ once a fresh right-hand side is revealed.

A naive attempt to gate a full $L_1$-curvature quadratic block runs into a Hessian-Lipschitz barrier: switching curvature of size $L_1$ costs third derivative proportional to $L_1$ divided by the gate width.  Our construction instead keeps every quadratic matrix visible from the beginning and gates only a bounded positive forcing term.  The function has the form
\begin{equation}\label{eq:intro-F}
F(z_1,\ldots,z_N)
=\sum_{s=1}^N \frac12 z_s^\top A_s z_s
-\ell_1(z_1)-\sum_{s=2}^N \chi(v_{s-1}(z_{s-1}))\ell_s(z_s),
\end{equation}
where $\chi$ is flat on $(-\infty,1/4]$, $\ell_s$ is a positive bounded forcing, and $v_s$ is a hidden readout direction depending on the fresh right-hand side of block $s$.

There are two obstacles.  First, the algorithm may query all block matrices in parallel before later right-hand sides are revealed.  We defeat this using large eigenspace multiplicities.  In every eigenspace, the algorithm's entire preprocessing span has dimension at most the number of queries, whereas the subsequently revealed right-hand side has a fresh random direction.  A fixed dual residual polynomial supplies a readout vector almost orthogonal to all degree-$<M$ Krylov directions generated by that right-hand side.  \Cref{lem:highmult,cor:freshbarrier} turn this separation into a gate barrier for the first $M$ post-revelation rounds, up to an absolute constant.

Second, opening a gate may expose its physical readout direction through a gradient. A matrix-vector oracle alone need not be able to simulate that vector. We therefore compare the true transcript to an auxiliary oracle with a fixed disclosure calendar: block $s$ receives its right-hand side at round $1+(s-1)m$, and its entire rotation becomes public at round $1+sm$. Before full disclosure its outgoing coupling is censored; afterward the oracle computes that coupling and its gradient exactly. The high-multiplicity bound keeps every censored gate flat, so the auxiliary and true responses agree along the coupled run. \Cref{lem:marked,lem:firstexit} prove the information-span and coupling statements. Fixed times avoid a hidden-vector-dependent stopping-time assumption; disclosing each rotation after its hiding period handles all later vector feedback.

Finally, hiding information is useful only if stationarity requires that information. Positive forcing rules out a stationary point that stops partway through a gate. A two-projection elimination first places every saturation in its linear region; the remaining readout inequality then propagates activation from the first block to the last. \Cref{lem:stationarity} proves that an $\eps$-stationary point must open every gate and solve the final quadratic system. Together with the sequential reveal barrier, this gives the product cost in \eqref{eq:NM-intro}.

\subsection{Parameter calibration}

We prove the lower bound by Yao's principle.  Fix a horizon $T_0$ specified below. We choose a distribution independent of the algorithm and then analyze any deterministic policy against it. The distribution is over functions in \eqref{eq:function-class}; with probability at least $2/3$ over that distribution, the algorithm does not query an $\eps$-stationary point before $cT_0$ calls.  The lower bound for randomized algorithms follows.

Fix a sufficiently large numerical constant $H$ and a sufficiently small numerical constant $c_\gamma>0$.  Define
\begin{equation}\label{eq:scales}
 h:=H\eps,
 \qquad
 \gamma:=c_\gamma\sqrt{L_2h},
 \qquad
 R:=\frac{h}{\gamma},
 \qquad
 D:=hR=\frac{h^2}{\gamma}.
\end{equation}
Here $h$ is the right-hand-side norm scale, $\gamma$ is the bottom eigenvalue in each quadratic block, $R$ is the solution radius, and $D$ is the function-value cost per stage.  Let $\bar L:=L_1/4$ and choose
\begin{equation}\label{eq:MN}
 M:=\left\lfloor c_M\sqrt{\frac{L_1}{\gamma}}\right\rfloor,
 \qquad
 N:=\left\lfloor c_N\frac{\Delta}{D}\right\rfloor
 =\left\lfloor c_N\frac{\Delta\gamma}{h^2}\right\rfloor,
\end{equation}
where $c_M,c_N>0$ are sufficiently small numerical constants.  In the regime \eqref{eq:regime}, $M,N\ge 2$ after changing constants.  Moreover
\begin{align}
 NM
 &\asymp \frac{\Delta\gamma}{h^2}\sqrt{\frac{L_1}{\gamma}}
 =\Delta L_1^{1/2}\gamma^{1/2}h^{-2}
 \asymp \Delta L_1^{1/2}L_2^{1/4}\eps^{-7/4}.
 \label{eq:product}
\end{align}
We take $T_0:=4NM$ and use polynomial degree parameter $m:=\lfloor M/2\rfloor$ below. Thus $m$ is a fixed constant fraction of $M$; choosing $c_M$ sufficiently small ensures the inverse-approximation condition in \Cref{lem:markov}. The block dimension will be chosen after $T_0$ is fixed. To make the constant choices unambiguous, first fix the moment constants and a permissible upper bound on the weight error. Choose $c_\alpha,H,K_0,C_0,C_B$ as in \cref{lem:stationarity}; then choose the small constants $c_\gamma,c_M,c_N$ and the regime constants. Finally reduce $\delta,\eta$ to satisfy \eqref{eq:flat-margin} and choose the multiplicities. Reducing the weight error preserves all preceding moment estimates.

\section{The Hard Distribution}

The construction has two separate requirements. Each block must hide a useful readout from short Krylov searches, and the couplings must preserve global smoothness while forcing sequential progress. We first choose a spectral residual that witnesses the hidden readout, then use bounded forcing terms to join independent blocks.

\subsection{Smooth primitives}

We use three $C^\infty$ primitives with fixed-width transitions. Explicit formulas and derivative bounds are given in the appendix.

\begin{lemma}[Flat gate, saturation and radial squash]\label{lem:primitives}
There exist $C^\infty$ maps with the following properties.
\begin{enumerate}[label=(\roman*),leftmargin=2.5em]
\item A nondecreasing gate $\chi:\R\to[0,1]$ satisfying
\[
 \chi(t)=0\quad(t\le 1/4),
 \qquad
 \chi(t)=1\quad(t\ge 1/2),
 \qquad
 \chi'(t)\ge0,
\]
with $\{t:\chi'(t)\ne0\}\subset(1/4,1/2)$ and $\|\chi^{(k)}\|_\infty\le C_k$ for $1\le k\le4$.
\item For every numerical $K_0\ge1$, a saturation $\vartheta:\R\to\R$ satisfying
\[
 \vartheta(t)=t\quad(|t|\le K_0),
 \qquad
 |\vartheta(t)|\le K_0+1,
\]
with $\|\vartheta'\|_\infty\le1$ and $\|\vartheta^{(k)}\|_\infty\le C_k$ for $2\le k\le4$, where the constants do not depend on $K_0$.
\item For every $B>0$, a radial map $\mathcal R_B:\R^d\to\R^d$ of the form $\mathcal R_B(z)=r_B(\|z\|)z$ such that
\[
 \mathcal R_B(z)=z\quad(\|z\|\le B),
 \qquad
 \|\mathcal R_B(z)\|\le2B,
 \qquad
 \|D^k\mathcal R_B(z)\|_{\op}\le C_kB^{1-k}
\]
for $1\le k\le4$.
\end{enumerate}
\end{lemma}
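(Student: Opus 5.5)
All three maps will be built from the standard $C^\infty$ bump $\psi(t)=e^{-1/t}\1_{t>0}$ and its normalized step $S(t)=\psi(t)/(\psi(t)+\psi(1-t))$. The function $S$ is $C^\infty$, nondecreasing, equal to $0$ on $(-\infty,0]$ and to $1$ on $[1,\infty)$, and each of its derivatives is bounded by a numerical constant. Every derivative of $S$ vanishes outside $(0,1)$, and $S'>0$ on $(0,1)$.

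For (i), set $\chi(t):=S(4t-1)$. Then $\chi$ vanishes for $t\le1/4$, equals $1$ for $t\ge1/2$, and is nondecreasing, with $\chi'$ supported in $(1/4,1/2)$. By the chain rule, $\|\chi^{(k)}\|_\infty\le 4^k\|S^{(k)}\|_\infty=:C_k$.

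For (ii), first define $\rho(u):=1-S(u)$. We need $\rho$ to equal $1$ on $(-\infty,0]$, vanish on $[1,\infty)$, and satisfy $\int_0^1\rho\le1$ and $0\le\rho\le1$; all of these follow from the properties of $S$. Then put
\[
 \vartheta(t):=\int_0^t \rho(|s|-K_0)\,ds .
\]
On $|t|\le K_0$ the integrand is $1$, so $\vartheta(t)=t$. The integrand is also a $C^\infty$ function of $s$, because it is identically $1$ near $s=0$, so $|s|$ causes no problem. The derivative $\vartheta'=\rho(|\cdot|-K_0)$ takes values in $[0,1]$. The bound $|\vartheta(t)|\le K_0+\int_0^1\rho\le K_0+1$ is immediate. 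For $k\ge2$, the derivative $\vartheta^{(k)}(t)=\pm\rho^{(k-1)}(|t|-K_0)$ is bounded by $\|S^{(k-1)}\|_\infty$, which does not depend on $K_0$. This gives the $K_0$-uniformity claimed in the lemma.

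For (iii), it suffices to treat $B=1$ and rescale by $\mathcal R_B(z):=B\,\mathcal R_1(z/B)$. Rescaling gives $D^k\mathcal R_B(z)=B^{1-k}(D^k\mathcal R_1)(z/B)$, which is exactly the required $B^{1-k}$ scaling, while the identities $\mathcal R_B(z)=z$ for $\|z\|\le B$ and $\|\mathcal R_B(z)\|\le2B$ carry over. Take $\mathcal R_1(z)=r(\|z\|)z$, and define $r$ so that the radial profile $q(u):=u\,r(u)=\vartheta_1(u)$ is a one-dimensional saturation. Here $\vartheta_1$ is the map of (ii) with $K_0=1$; it is the identity on $[0,1]$, bounded by $2$, and increasing.

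The main obstacle is smoothness at the origin together with a uniform bound on $D^k\mathcal R_1$. At the origin there is no difficulty: $r\equiv1$ on $[0,1]$, so $\mathcal R_1$ is the identity near $0$. Away from the origin, on the region $\|z\|\ge1$, the function $r(u)=\vartheta_1(u)/u$ is smooth with bounded derivatives up to order $4$, and $\|z\|$ is smooth there with $k$-th derivatives bounded by $C\|z\|^{1-k}\le C$. Faà di Bruno and the Leibniz rule then bound $D^k[r(\|z\|)z]$ by terms of the form $|r^{(j)}(\|z\|)|\,\|z\|^{j}$ times constants. These stay bounded even as $\|z\|\to\infty$, because $r^{(j)}(u)=O(u^{-1-j})$ once $\vartheta_1$ is constant. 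The norm bound follows from $\|\mathcal R_1(z)\|=q(\|z\|)\le2$. All constants depend only on $k\le4$, which completes the argument.
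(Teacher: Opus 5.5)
Your proposal is correct and follows the paper's proof essentially verbatim: it uses the same step $S$, the gate $\chi(t)=S(4t-1)$, the saturation $\vartheta(t)=\int_0^t[1-S(|q|-K_0)]\,dq$, and a radial profile that coincides with the paper's $\rho(r)=\int_0^r[1-S(q-1)]\,dq$ (your $\vartheta_1$ on $[0,\infty)$). The only differences are presentational: you rescale to $B=1$ and bound derivatives by Fa\`a di Bruno on $r(u)=\vartheta_1(u)/u$, whereas the paper bounds derivatives of the unit radial vector directly; also, $\vartheta_1$ is only nondecreasing (it is constant for $u\ge2$), though your argument never needs it to be strictly increasing.
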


The radial form is important: $\mathcal R_B(z)$ is a scalar multiple of $z$ and hence belongs to any linear information span containing $z$.

\subsection{A fixed dual residual}

The following polynomial witness is the source of the per-stage cost $M$.

\begin{lemma}[Uniform inverse-approximation lower bound]\label{lem:markov}
There are numerical constants $c_0,c_1>0$ such that, if $1\le m\le c_0\sqrt{\bar L/\gamma}$ and $2\gamma\le\bar L$, then
\[
 \inf_{p\in\mathcal P_{m-1}}
 \sup_{\lambda\in[\gamma,\bar L]}
 \left|\frac1\lambda-p(\lambda)\right|
 \ge \frac{c_1}{\gamma}.
\]
\end{lemma}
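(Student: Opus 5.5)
The plan is a first-order perturbation argument. If $p$ approximates $1/\lambda$ uniformly to within $c_1/\gamma$ on $[\gamma,\bar L]$, then $p$ is bounded on that interval. The Markov brothers' inequality then limits how fast $p$ can vary, whereas $1/\lambda$ drops by $1/(2\gamma)$ across $[\gamma,2\gamma]$. When $m^2\gamma/\bar L$ is small, $p$ cannot follow that drop, so the error must be of order $1/\gamma$.

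\textbf{Step 1 (boundedness).} Let $p\in\mathcal P_{m-1}$ and set $\epsilon:=\sup_{\lambda\in[\gamma,\bar L]}|1/\lambda-p(\lambda)|$. We must show $\epsilon\ge c_1/\gamma$. Suppose instead $\epsilon<c_1/\gamma$ with $c_1\le 1$. Then on $[\gamma,\bar L]$,
\[
|p(\lambda)|\le \frac1\lambda+\epsilon\le \frac{2}{\gamma}.
\]

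\textbf{Step 2 (Markov inequality).} Apply the Markov brothers' inequality to $p$, of degree $m-1$, on the interval $[\gamma,\bar L]$ of length $\bar L-\gamma\ge \bar L/2$; here we use $2\gamma\le\bar L$. This gives
\[
\sup_{[\gamma,\bar L]}|p'|\le \frac{2(m-1)^2}{\bar L-\gamma}\cdot\frac{2}{\gamma}\le \frac{8m^2}{\bar L\gamma}\le \frac{8c_0^2}{\gamma^2},
\]
where the last inequality uses $m\le c_0\sqrt{\bar L/\gamma}$.

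\textbf{Step 3 (comparison on $[\gamma,2\gamma]$).} The interval $[\gamma,2\gamma]$ lies inside $[\gamma,\bar L]$. On it, $1/\lambda$ changes by exactly $1/\gamma-1/(2\gamma)=1/(2\gamma)$. By the mean value theorem, $p$ changes by at most $\gamma\cdot 8c_0^2/\gamma^2=8c_0^2/\gamma$. Hence the error $e:=1/\lambda-p$ satisfies
\[
|e(\gamma)-e(2\gamma)|\ge \frac{1}{2\gamma}-\frac{8c_0^2}{\gamma}\ge\frac{1}{4\gamma}
\]
once $c_0^2\le 1/32$. Therefore $\max(|e(\gamma)|,|e(2\gamma)|)\ge 1/(8\gamma)$. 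Taking $c_1=1/8$ contradicts $\epsilon<c_1/\gamma$, so $\epsilon\ge c_1/\gamma$ for every $p\in\mathcal P_{m-1}$, which is the claimed bound on the infimum. The case $m=1$ is covered by the same argument: then $p$ is constant and the derivative bound is trivial.

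\textbf{Main obstacle.} The only delicate point is choosing the right mechanism. A Chebyshev-extrapolation argument through the residual $q(\lambda)=1-\lambda p(\lambda)$ with $q(0)=1$ is the natural first idea. It fails because the weighted constraint $|q(\lambda)|\le\epsilon\lambda$ allows $q$ to be as large as $\epsilon\bar L$ near $\bar L$, which destroys the extrapolation estimate at $0$. Bounding $p$ itself, and not the residual, avoids this: the uniform bound $2/\gamma$ on $p$ combines with Markov's $m^2/\bar L$ factor to give exactly the scale $c_0^2/\gamma^2$.
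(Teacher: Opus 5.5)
Your proof is correct and follows the paper's route: a uniform bound on $p$ from the approximation hypothesis, Markov's inequality on $[\gamma,\bar L]$ giving $\|p'\|_\infty\lesssim m^2/(\bar L\gamma)$, and the drop of $1/\lambda$ across $[\gamma,2\gamma]$ forcing a contradiction. The differences are cosmetic. The paper argues in the other direction, deriving $\|p'\|_\infty\ge 3/(8\gamma^2)$ from an error below $1/(16\gamma)$. You also make the constants explicit ($c_0^2\le 1/32$, $c_1=1/8$), whereas the paper leaves them implicit.
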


\begin{proof}
If the error were less than $1/(16\gamma)$, then $p(\gamma)\ge15/(16\gamma)$ and $p(2\gamma)\le9/(16\gamma)$, so $\|p'\|_\infty\ge3/(8\gamma^2)$.  Markov's inequality on $[\gamma,\bar L]$ gives
\[
 \|p'\|_\infty\le \frac{2(m-1)^2}{\bar L-\gamma}\|p\|_\infty
 \le C\frac{m^2}{\bar L\gamma},
\]
which is a contradiction for $m\le c_0\sqrt{\bar L/\gamma}$ with $c_0$ small enough.
\end{proof}

Minimax duality and finite-dimensional moment representation convert \Cref{lem:markov} into a finite spectral design.

\begin{lemma}[Finite dual spectral design]\label{lem:dual}
Under the hypotheses of \cref{lem:markov}, there are distinct points $\lambda_1,\ldots,\lambda_J\in[\gamma,\bar L]$ and positive weights $\nu_1,\ldots,\nu_J$ with $\sum_j\nu_j=1$ and $J\le2m+2$ such that
\begin{equation}\label{eq:dualL2}
 \inf_{p\in\mathcal P_{m-1}}
 \sum_{j=1}^J\nu_j\left(\frac1{\lambda_j}-p(\lambda_j)\right)^2
 \ge \frac{c_2}{\gamma^2}.
\end{equation}
\end{lemma}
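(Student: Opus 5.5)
We deduce \Cref{lem:dual} from \Cref{lem:markov} by convex duality, followed by a Carathéodory-type reduction to finitely many atoms. Set $g(\lambda):=1/\lambda$ and $E:=\inf_{p\in\mathcal P_{m-1}}\sup_{[\gamma,\bar L]}|g-p|$, so that $E\ge c_1/\gamma$ by \Cref{lem:markov}.

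\emph{Duality step.} We use the classical characterization of best uniform approximation by the $m$-dimensional Haar space $\mathcal P_{m-1}$ on $[\gamma,\bar L]$. By Chebyshev/de la Vallée Poussin theory (or by Hahn--Banach together with Carathéodory), there are $m+1$ points $\lambda_1<\dots<\lambda_{m+1}$ in $[\gamma,\bar L]$ and a signed measure $\mu=\sum_j\sigma_j\nu_j\delta_{\lambda_j}$ with the following properties:
\begin{itemize}
  \item $\nu_j>0$ and $\sum_j\nu_j=1$;
  \item the signs $\sigma_j\in\{\pm1\}$;
  \item $\mu$ annihilates $\mathcal P_{m-1}$, i.e.\ $\sum_j\sigma_j\nu_j q(\lambda_j)=0$ for all $q\in\mathcal P_{m-1}$;
  \item $\sum_j\sigma_j\nu_j g(\lambda_j)=E$.
\end{itemize}
Concretely, the $\lambda_j$ are alternation points of the best approximant. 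Alternatively, the annihilating measure is given by divided-difference weights on any $m+1$ nodes: $\sigma_j\nu_j\propto 1/\prod_{i\ne j}(\lambda_j-\lambda_i)$, which alternate in sign. The identity $\sum_j\sigma_j\nu_j g(\lambda_j)=E$ is the statement that strong duality holds in the linear program $\min_p\max_j|g-p|$.

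\emph{From $L^\infty$ to weighted $L^2$.} For any $p\in\mathcal P_{m-1}$, annihilation gives
\[
E=\sum_j\sigma_j\nu_j\bigl(g(\lambda_j)-p(\lambda_j)\bigr)\le\sum_j\nu_j|g(\lambda_j)-p(\lambda_j)|\le\Bigl(\sum_j\nu_j(g-p)(\lambda_j)^2\Bigr)^{1/2}
\]
by Cauchy--Schwarz, since the $\nu_j$ form a probability vector. Hence $\inf_p\sum_j\nu_j(1/\lambda_j-p(\lambda_j))^2\ge E^2\ge c_1^2/\gamma^2$. This proves \eqref{eq:dualL2} with $c_2=c_1^2$ and $J=m+1\le 2m+2$ atoms.

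\emph{Distinctness and positivity.} The points are distinct by construction. Any $\nu_j=0$ can be dropped, since removing a zero-weight atom leaves both the annihilation property and the $L^2$ bound unchanged. Thus all remaining weights are positive and the number of atoms stays at most $m+1$.

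\emph{Main obstacle.} The hardest part is justifying existence of the annihilating measure with exactly the value $E$, i.e.\ establishing strong duality without an explicit Remez/alternation argument. The cleanest route is the Hahn--Banach theorem on $C[\gamma,\bar L]$. This produces a measure $\mu$ of total variation $1$ that annihilates $\mathcal P_{m-1}$ and satisfies $\int g\,d\mu=E$. A Carathéodory argument then replaces $\mu$ by a finitely supported one while preserving these finitely many linear constraints and the total variation bound. Splitting $\mu$ into its positive and negative parts and reducing each part separately to at most $m+1$ atoms yields $J\le2m+2$, which matches the bound stated in the lemma. A slightly smaller value of $\int g\,d\mu$ caused by approximation would only change the constant $c_2$.
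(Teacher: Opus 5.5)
Your proof is correct, but it reaches the design through a different duality than the paper.

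\textbf{The paper's route.} It applies Sion's minimax theorem directly to the squared-error functional $\Phi(p,\mu)=\int(\lambda^{-1}-p)^2\,d\mu$ over probability measures on $[\gamma,\bar L]$. It then extracts a maximizing measure $\mu_*$ by weak compactness and upper semicontinuity. Finally it compresses $\mu_*$ by Carath\'eodory applied to the moment curve $\psi(\lambda)=(\lambda^{-2},\lambda^{-1},1,\lambda,\ldots,\lambda^{2m-2})$, which is where the count $J\le 2m+2$ comes from.

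\textbf{Your route.} You use the linear $L^\infty$ duality for best approximation from the Haar space $\mathcal P_{m-1}$. At the $m+1$ equioscillation nodes of $\lambda^{-1}-p^*$, the normalized divided-difference weights alternate in sign exactly as the error does, up to a global sign you can flip. They therefore define a signed measure $\mu$ of total variation one that annihilates $\mathcal P_{m-1}$ and satisfies $\int\lambda^{-1}\,d\mu=E$. Note that the value identity needs the alternation nodes, not arbitrary nodes, as you indicate. A single Cauchy--Schwarz step against the probability measure $|\mu|$ then transfers the $L^\infty$ bound to $L^2(|\mu|)$.

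\textbf{What each buys.} Your argument gives an explicit design (alternation nodes with divided-difference weights), a smaller support $J=m+1$, and an explicit constant $c_2=c_1^2$. It needs neither a minimax theorem nor the existence of an $L^2$-maximizing measure, but it relies on the Haar/equioscillation structure of polynomials. The paper's argument is more abstract and would apply to approximating classes without the Haar property, at the cost of heavier compactness machinery and a larger atom count. Nothing downstream needs more than $m+1$ atoms, since the $L^2(\nu)$ projection onto $\mathcal P_{m-1}$ is already well defined on $m+1$ distinct nodes.

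\textbf{Your fallback.} The Hahn--Banach argument with separate Carath\'eodory reductions of $\mu^{\pm}$ also works. The Cauchy--Schwarz step only needs some finitely supported annihilating $\mu$ with total variation one and $\int\lambda^{-1}\,d\mu$ bounded below. However, it is unnecessary once you invoke the alternation theorem.
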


Let $p_\star$ be the $L^2(\nu)$ projection of $\lambda^{-1}$ onto $\mathcal P_{m-1}$ and set
\begin{equation}\label{eq:resid}
 r(\lambda):=\lambda^{-1}-p_\star(\lambda).
\end{equation}
Then
\begin{equation}\label{eq:nuorth}
 \sum_j\nu_j r(\lambda_j)p(\lambda_j)=0\quad\forall p\in\mathcal P_{m-1},
 \qquad
 \|r\|_{L^2(\nu)}\ge c/\gamma.
\end{equation}

Choose large multiplicities $K_1,\ldots,K_J$ and define
\[
 d_0:=\sum_jK_j,
 \qquad
 \Lambda:=\diag(\lambda_1I_{K_1},\ldots,\lambda_JI_{K_J}).
\]
The multiplicities are chosen so that $K_j/d_0$ approximates $\nu_j$ and so that the high-multiplicity concentration estimates below hold.  Draw $U$ Haar on $O(d_0)$, set $A:=U\Lambda U^\top$, and draw $b$ independently and uniformly on the sphere of radius $h$.  Let $P_j$ be the spectral projector of $A$ at $\lambda_j$ and put $\omega_j:=\|P_jb\|^2/h^2$.  Define
\begin{equation}\label{eq:a-def}
 a:=\frac{r(A)b}{\|r(A)b\|}
\end{equation}
on the full-probability event $r(A)b\ne0$, and choose a fixed unit vector otherwise. Finally, set the deterministic gate scale
\begin{equation}\label{eq:alpha0}
 \alpha_0:=c_\alpha\frac h\gamma,
\end{equation}
where $c_\alpha>0$ is a sufficiently small numerical constant.

Let $\good_{\rm wt}$ be the event that $\omega_j$ is within a fixed small relative error of $\nu_j$ for every $j$.

\begin{lemma}[Moment stability and approximate short-Krylov orthogonality]\label{lem:blockmoments}
The multiplicities can be chosen so that $\Prob(\good_{\rm wt})\ge1-\zeta$ for any prescribed $\zeta>0$.  On $\good_{\rm wt}$, with
\[
 \alpha:=a^\top A^{-1}b,
 \qquad
 \beta:=a^\top A^{-1}a,
 \qquad
 \rho:=b^\top A^{-1}b,
\]
one has
\begin{equation}\label{eq:moment-bounds}
 c\frac h\gamma\le\alpha\le\frac h\gamma,
 \qquad
 c\frac1\gamma\le\beta\le\frac1\gamma,
 \qquad
 c\frac{h^2}\gamma\le\rho\le\frac{h^2}\gamma.
\end{equation}
Moreover, for every $p\in\mathcal P_{m-1}$,
\begin{equation}\label{eq:approx-orth}
 |a^\top p(A)b|\le C\delta\|p(A)b\|,
\end{equation}
where $\delta$ is the relative accuracy in $\good_{\rm wt}$.  If
\[
 s:=\frac{\alpha}{\alpha_0},
 \qquad
 k:=\frac{D\beta}{\alpha_0^2},
 \qquad
 r_0:=\frac{\rho}{D},
\]
then, for $c_\alpha$ sufficiently small,
\begin{equation}\label{eq:skr-main}
 1\le s\le C,
 \qquad
 c\le k\le C,
 \qquad
 c\le r_0\le1,
 \qquad
 kr_0\ge s^2.
\end{equation}
\end{lemma}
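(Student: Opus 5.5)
The plan is to reduce every quantity to the spectral weights $\omega_j$ and then use the design of \Cref{lem:dual}. Since $U$ is Haar and $b$ is independent and uniform on the sphere of radius $h$, the vector $U^\top b/h$ is uniform on the unit sphere. Hence $(\omega_1,\ldots,\omega_J)$ has a Dirichlet law with parameters $K_j/2$. In particular $\omega_j$ has mean $K_j/d_0$ and variance $O(1/d_0)$.

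\textbf{Step 1: the event $\good_{\rm wt}$.} Choose rational approximations $K_j/d_0\to\nu_j$ and let $d_0\to\infty$. Chebyshev's inequality plus a union bound over $J\le 2m+2$ coordinates then gives $\Prob(\good_{\rm wt})\ge1-\zeta$ for any prescribed relative accuracy $\delta$.

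\textbf{Step 2: reduce everything to the measure $\mu:=\sum_j\omega_j\delta_{\lambda_j}$.} For any functions $f,g$ one has $b^\top f(A)g(A)b=h^2\sum_j\omega_jf(\lambda_j)g(\lambda_j)$. Therefore
\[
\rho=h^2\int\lambda^{-1}d\mu,
\qquad
\alpha=\frac{h\int r\lambda^{-1}d\mu}{\|r\|_{L^2(\mu)}},
\qquad
\beta=\frac{\int r^2\lambda^{-1}d\mu}{\|r\|^2_{L^2(\mu)}}.
\]
The upper bounds then follow directly:
\begin{itemize}
\item $\rho\le h^2/\gamma$ and $\beta\le1/\gamma$ hold because $\lambda\ge\gamma$.
\item $\alpha\le h/\gamma$ follows from Cauchy--Schwarz and $\|\lambda^{-1}\|_{L^2(\mu)}\le1/\gamma$.
\end{itemize}

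For the lower bound on $\alpha$, use the $\nu$-orthogonality \eqref{eq:nuorth} of $r$ to $p_\star$. It gives $\int r\lambda^{-1}d\nu=\|r\|^2_{L^2(\nu)}\ge c/\gamma^2$. Transferring from $\nu$ to $\mu$ costs a relative error $\delta$ times $\int|r|\lambda^{-1}d\nu$. I will bound that error by controlling $\|r\|_{L^2(\nu)}\le\|\lambda^{-1}\|_{L^2(\nu)}\le1/\gamma$, since the projection is a contraction. This yields $\alpha\ge ch/\gamma$. The lower bounds on $\beta$ and $\rho$ need lower bounds on $\int r^2\lambda^{-1}d\nu$ and $\int\lambda^{-1}d\nu$, and here lies the main obstacle. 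A general finite design could place its mass near $\bar L$, which would make these integrals only of order $1/\bar L$. I would first try to use the structure of \Cref{lem:dual} itself: the lower bound \eqref{eq:dualL2} with $p=0$ is compatible only with substantial weighted mass where $\lambda\lesssim\gamma$. Concretely, $\int r^2d\nu\ge c_2/\gamma^2$ together with $|r|\le|\lambda^{-1}|+|p_\star|$ should force $\nu$ to charge $[\gamma,C\gamma]$ with $r^2$-weight $\gtrsim 1/\gamma^2$. From this, $\int r^2\lambda^{-1}d\nu\ge c/\gamma^3$ and $\int\lambda^{-1}d\nu\ge c/\gamma$ follow. If this fails, I would instead modify the design by mixing a fixed constant mass at $\lambda=\gamma$ into $\nu$. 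That change preserves \eqref{eq:dualL2} up to constants, since adding mass only increases the infimum, and it makes both lower bounds immediate.

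\textbf{Step 3: approximate orthogonality \eqref{eq:approx-orth}.} Write
\[
a^\top p(A)b=\frac{h^2\int rp\,d\mu}{\|r(A)b\|}.
\]
Its $\nu$-counterpart vanishes by \eqref{eq:nuorth}, so
\[
|\textstyle\int rp\,d\mu|=|\textstyle\int rp\,d(\mu-\nu)|\le\delta\int|rp|d\nu\le C\delta\|r\|_{L^2(\mu)}\|p\|_{L^2(\mu)}.
\]
Dividing by $\|r(A)b\|=h\|r\|_{L^2(\mu)}$ and using $\|p(A)b\|=h\|p\|_{L^2(\mu)}$ gives \eqref{eq:approx-orth}.

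\textbf{Step 4: the normalized bounds \eqref{eq:skr-main}.} Substitute $\alpha_0=c_\alpha h/\gamma$ and $D=h^2/\gamma$. Then $s=\alpha/\alpha_0\in[c/c_\alpha,1/c_\alpha]$, which is at least $1$ once $c_\alpha\le c$. Next, $k=D\beta/\alpha_0^2=\gamma\beta/c_\alpha^2$ is of order $c_\alpha^{-2}$. Finally, $r_0=\rho/D=\gamma\int\lambda^{-1}d\mu\in[c,1]$. The inequality $kr_0\ge s^2$ is Cauchy--Schwarz for the inner product $x^\top A^{-1}y$: $\alpha^2\le\beta\rho$, which is equivalent to $s^2\le kr_0$ after the normalization. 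The numerical constants are then renamed, since $c,C$ are allowed to depend on the fixed $c_\alpha$.
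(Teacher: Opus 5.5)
You prove the upper bounds, the lower bound on $\alpha$, the approximate orthogonality \eqref{eq:approx-orth} and the normalized bounds \eqref{eq:skr-main} along the paper's lines, but you do not prove the lower bounds on $\beta$ and $\rho$. These are the step you call the main obstacle. Route (a) relies on $|r|\le|\lambda^{-1}|+|p_\star|$. That inequality says nothing about where the $r^2$-mass of $\nu$ sits, because nothing in it controls $p_\star$ away from $\gamma$. Even if the $r^2$-weight were localized on $[\gamma,C\gamma]$, that would not bound $\nu([\gamma,C\gamma])$ from below, which is what $\int\lambda^{-1}\,d\nu$ needs. Route (b) changes the construction instead of proving the lemma for the given design. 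It also does not make the $\beta$ bound immediate. Here $\beta=\int r'^2\lambda^{-1}d\mu/\|r'\|^2_{L^2(\mu)}$ involves the new residual $r'$, so an atom at $\gamma$ helps only if $|r'(\gamma)|\gtrsim\|r'\|_{L^2(\nu')}$, and you have not shown that.

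The missing step is one you already use in Step 4. Cauchy--Schwarz in the $A^{-1}$ inner product gives $\alpha^2\le\beta\rho$. Combine it with $\alpha\ge ch/\gamma$ and the upper bounds $\beta\le 1/\gamma$, $\rho\le h^2/\gamma$:
\[
 \beta\ge\frac{\alpha^2}{\rho}\ge\frac{c^2}{\gamma},
 \qquad
 \rho\ge\frac{\alpha^2}{\beta}\ge\frac{c^2h^2}{\gamma}.
\]
This is how the paper concludes. Your concern that the design might concentrate near $\bar L$ is already excluded by \eqref{eq:dualL2}, since $\|\lambda^{-1}\|_{L^2(\nu)}\ge\|r\|_{L^2(\nu)}\ge c/\gamma$. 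The lower bound on $\alpha$ already carries that information, so no change to the design is needed.
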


\begin{proof}
The weight vector $(\omega_j)$ has a Dirichlet law with parameters $(K_j/2)_j$, giving the weight event by chi-square concentration and a union bound.  On that event the empirical $\omega$-norm and the design $\nu$-norm are equivalent.  Since $r\perp\mathcal P_{m-1}$ in $L^2(\nu)$,
\[
 |\langle r,p\rangle_\omega|
 \le |\langle r,p\rangle_\omega-\langle r,p\rangle_\nu|
 \le C\delta\|r\|_\omega\|p\|_\omega,
\]
which is \eqref{eq:approx-orth}.  Also
\[
 \alpha=h\frac{\langle r,\lambda^{-1}\rangle_\omega}{\|r\|_\omega}.
\]
Under $\nu$, $\langle r,\lambda^{-1}\rangle_\nu=\|r\|_\nu^2$ by \eqref{eq:nuorth}. Relative weight accuracy also gives
\[
 |\langle r,\lambda^{-1}\rangle_\omega-\langle r,\lambda^{-1}\rangle_\nu|
 \le\delta\|r\|_\nu\|\lambda^{-1}\|_\nu
 \le\frac{\delta}{\gamma}\|r\|_\nu.
\]
Since $\|r\|_\nu\ge c/\gamma$, choosing $\delta$ below a fixed numerical threshold makes this error at most $\|r\|_\nu^2/2$. Norm equivalence now yields the positive lower bound on $\alpha$. Its upper bound follows from $A\succeq\gamma I$.  The upper bounds on $\beta$ and $\rho$ are immediate from $A\succeq\gamma I$.  Cauchy--Schwarz in the $A^{-1}$ inner product gives $\alpha^2\le\beta\rho$, which yields the lower bounds on $\beta$ and $\rho$.  The displayed bounds for $s,k,r_0$ follow from \eqref{eq:moment-bounds} and \eqref{eq:alpha0}.
\end{proof}

\subsection{Bounded positive cascade}

The spectral design supplies a hard direction for each quadratic block. We now couple the blocks without switching their quadratic curvature on and off. Bounded positive forcing makes the coupling derivatives small and later prevents cancellation at partially open gates.

Take $N$ independent copies $(A_s,b_s,a_s)$ of the block distribution.  Let
\begin{equation}\label{eq:vqell}
 B:=C_BR,
 \qquad
 v_s(z_s):=\frac{a_s^\top\mathcal R_B(z_s)}{\alpha_0},
 \qquad
 q_s(z_s):=\frac{b_s^\top z_s}{D},
\end{equation}
and
\begin{equation}\label{eq:ell}
 \ell_s(z_s):=D\bigl[C_0+\vartheta(q_s(z_s))\bigr],
 \qquad C_0>K_0+2.
\end{equation}
Then
\begin{equation}\label{eq:ellbounds}
 D\le\ell_s\le CD,
 \qquad
 \|\nabla\ell_s\|\le Ch,
 \qquad
 \|\nabla^2\ell_s\|\le C\gamma,
 \qquad
 \|D^3\ell_s\|\le C\frac{\gamma^2}{h}.
\end{equation}

\begin{construction}[Static hard instance]\label{con:cascade}
For $z=(z_1,\ldots,z_N)\in(\R^{d_0})^N$, define
\begin{equation}\label{eq:F}
 F(z)=\sum_{s=1}^N\frac12 z_s^\top A_sz_s
 -\ell_1(z_1)
 -\sum_{s=2}^N\chi\bigl(v_{s-1}(z_{s-1})\bigr)\ell_s(z_s).
\end{equation}
Write $c_1:=1$ and $c_s:=\chi(v_{s-1})$ for $s\ge2$.
\end{construction}

\section{Smoothness and Stationarity}

We verify admissibility for every realization before using any high-probability event. The stationarity argument then works on the spectral-weight event and shows why a successful output must reach the end of the cascade.

\subsection{Membership in the target class}

\begin{lemma}[One coupling derivative scales]\label{lem:coupling-smooth}
Let
\[
 G(x,y):=-\chi\!\left(\frac{a^\top\mathcal R_B(x)}{\alpha_0}\right)\ell(y),
\]
where $\alpha_0\asymp h/\gamma$, $B\asymp h/\gamma$, and \eqref{eq:ellbounds} holds.  Then
\[
 \|\nabla^2G\|_{\op}\le C\gamma,
 \qquad
 \|D^3G\|_{\op}\le C\frac{\gamma^2}{h}.
\]
\end{lemma}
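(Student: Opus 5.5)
Write $G=-g(x)\ell(y)$ with $g(x):=\chi(u(x))$ and $u(x):=a^\top\mathcal R_B(x)/\alpha_0$, where $\|a\|=1$. The plan is to bound the derivatives of the scalar function $u$, then those of $g$ by the chain rule, and finally those of the product $g(x)\ell(y)$ by the Leibniz rule, matching every term to the target scales $\gamma$ and $\gamma^2/h$.

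\textbf{Step 1: derivatives of $u$.} By \Cref{lem:primitives}(iii), $\|D^k u\|_{\op}\le C_kB^{1-k}/\alpha_0$. Since $B\asymp\alpha_0\asymp h/\gamma$, this gives
\[
 \|\nabla u\|\le C\frac{\gamma}{h},\qquad \|D^2u\|_{\op}\le C\frac{\gamma^2}{h^2},\qquad \|D^3u\|_{\op}\le C\frac{\gamma^3}{h^3}.
\]
That is, $\|D^ku\|\le C(\gamma/h)^k$.

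\textbf{Step 2: derivatives of $g$.} Faà di Bruno, together with $\|\chi^{(k)}\|_\infty\le C_k$, yields $\|D^k g\|\le C(\gamma/h)^k$ for $k=1,2,3$. Each term is a product of derivatives of $u$ whose orders sum to $k$, and Step 1 bounds each such product by $C(\gamma/h)^k$. We also have $0\le g\le1$.

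\textbf{Step 3: the Hessian of $G$.} The Hessian has three blocks: $D^2g\,\ell$, $\nabla g\otimes\nabla\ell$ (plus its transpose), and $g\,D^2\ell$. By \eqref{eq:ellbounds} and Step 2 these are bounded as follows:
\begin{itemize}
\item $\|D^2g\,\ell\|\le C(\gamma^2/h^2)\cdot D$, and $D=h^2/\gamma$ makes this $C\gamma$;
\item $\|\nabla g\otimes\nabla\ell\|\le C(\gamma/h)\cdot h=C\gamma$;
\item $\|g\,D^2\ell\|\le C\gamma$.
\end{itemize}
Hence $\|\nabla^2G\|_{\op}\le C\gamma$.

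\textbf{Step 4: the third derivative of $G$.} The mixed-order terms are $D^ig\otimes D^j\ell$ with $i+j=3$. Their norms are bounded by $(\gamma/h)^i$ times the size of $D^j\ell$:
\begin{itemize}
\item $i=3$: $\gamma^3/h^3\cdot h^2/\gamma=\gamma^2/h$;
\item $i=2$: $\gamma^2/h^2\cdot h=\gamma^2/h$;
\item $i=1$: $\gamma/h\cdot\gamma=\gamma^2/h$;
\item $i=0$: $\gamma^2/h$.
\end{itemize}
For a multilinear form on $\R^{d_0}\times\R^{d_0}$, the operator norm of the full tensor is at most the sum of the block norms, up to a combinatorial constant. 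Hence $\|D^3G\|_{\op}\le C\gamma^2/h$.

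\textbf{Main obstacle.} The delicate point is that $\ell$ is not small: $\ell\asymp D$. The gate-derivative terms survive only because the gate width $\alpha_0$ and the squash radius $B$ both scale like $h/\gamma$, so the factor $(\gamma/h)^k$ exactly cancels the size $D=h^2/\gamma$. This is why \Cref{lem:primitives}(iii) must give $B^{1-k}$ scaling, with $B=C_BR\asymp h/\gamma$ rather than a larger radius. It is also why the bounded positive forcing $\ell$ is used instead of gating the full quadratic. Beyond tracking these constants, every step is routine.
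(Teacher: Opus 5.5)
Your proposal is correct and matches the paper's proof: bound $\|D^k v\|\lesssim(\gamma/h)^k$ from the $B^{1-k}$ scaling of $\mathcal R_B$, then apply the chain and product rules so that each Hessian term is one of $D\gamma^2/h^2$, $h\cdot\gamma/h$, $\gamma$, and each third-derivative term is one of $D\gamma^3/h^3$, $h\gamma^2/h^2$, $\gamma\cdot\gamma/h$, $\gamma^2/h$; all of these collapse via $D=h^2/\gamma$. Your version only spells out the Fa\`a di Bruno step and the passage from block norms to the full tensor norm, both of which the paper leaves implicit.
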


\begin{proof}
The readout $v(x)=a^\top\mathcal R_B(x)/\alpha_0$ obeys
\[
 \|Dv\|\lesssim\gamma/h,
 \qquad
 \|D^2v\|\lesssim\gamma^2/h^2,
 \qquad
 \|D^3v\|\lesssim\gamma^3/h^3.
\]
Each second derivative is bounded by one of $D\gamma^2/h^2$, $h\gamma/h$, or $\gamma$.  Each third derivative is bounded by one of $D\gamma^3/h^3$, $h\gamma^2/h^2$, $\gamma\gamma/h$, or $\gamma^2/h$.  Since $D=h^2/\gamma$, the claim follows.
\end{proof}

\begin{lemma}[Bounded local overlap]\label{lem:overlap}
Suppose a function on a block product space is a sum of terms supported on one block or two adjacent blocks, with at most $q$ terms on each block and each edge. If each local Hessian and third derivative have operator norms at most $L$ and $M_3$, then the global norms are at most $CqL$ and $CqM_3$, independently of the number of blocks. Here $q$ is an absolute constant for the cascade.
\end{lemma}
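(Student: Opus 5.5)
We prove \Cref{lem:overlap} by a blockwise Cauchy--Schwarz argument. Treat the second and third derivatives as multilinear forms and bound their action on arbitrary unit vectors. Write $f=\sum_{e}g_e$, where each index $e$ is either a single block $\{s\}$ or an adjacent pair $\{s,s+1\}$, and $g_e$ depends only on the coordinates $z_e$ of the blocks in $e$. For a direction $u=(u_1,\ldots,u_N)$, set $u_e$ to be the restriction of $u$ to the blocks of $e$. Then
\[
 D^kf(z)[u,\ldots,u]=\sum_e D^kg_e(z_e)[u_e,\ldots,u_e],
\]
because $D^kg_e$ annihilates components outside $e$. The analogous identity holds for mixed arguments $[u,v,w]$. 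Since the forms are symmetric, polarization controls the operator norm from the diagonal values up to a constant depending only on $k\le3$. Alternatively, we can estimate the mixed forms directly, which avoids polarization entirely.

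For the Hessian we use the direct estimate. With unit vectors $u,v$,
\[
 |\nabla^2f[u,v]|\le\sum_e L\|u_e\|\|v_e\|\le L\Bigl(\sum_e\|u_e\|^2\Bigr)^{1/2}\Bigl(\sum_e\|v_e\|^2\Bigr)^{1/2}.
\]
The key counting step is that each block $s$ belongs to at most $3q$ index sets $e$: at most $q$ terms live on $\{s\}$, and at most $q$ terms live on each of the edges $\{s-1,s\}$ and $\{s,s+1\}$. Hence
\[
 \sum_e\|u_e\|^2=\sum_e\sum_{s\in e}\|u_s\|^2\le 3q\|u\|^2,
\]
and so $\|\nabla^2 f\|_{\op}\le3qL$.

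For the third derivative we apply the same reasoning. We bound $\sum_e M_3\|u_e\|\|v_e\|\|w_e\|$ by $M_3\sup_e\|w_e\|\sum_e\|u_e\|\|v_e\|$. Since $\|w_e\|\le\|w\|=1$, this is at most $3qM_3$, again independent of $N$. The Lipschitz constants of $\nabla f$ and $\nabla^2f$ then follow from these sup bounds by the mean value theorem.

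Finally, we check that the cascade fits this framework. Each block carries only the terms $\tfrac12z_s^\top A_sz_s$ and $\ell_1$, and each edge carries the single coupling $\chi(v_{s-1})\ell_s$. The local bounds are supplied by $\|A_s\|\le\bar L$, by \eqref{eq:ellbounds}, and by \Cref{lem:coupling-smooth}. So $q\le2$ is an absolute constant. There is no real obstacle here; the only point needing care is the counting that each block lies in at most $3q$ supports.
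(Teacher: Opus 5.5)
Your proof is correct. It reaches the same constant $3q$ as the paper, by a slightly more direct route.

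The paper first splits the terms into $3q$ subfamilies with pairwise disjoint supports: the at most $q$ terms on each support go into $q$ layers, and the edges are two-colored. It bounds each subfamily by $L$ and $M_3$, using Cauchy--Schwarz for the Hessian and H\"older with $\|a\|_3\le\|a\|_2$ for the third derivative. It then sums the subfamilies by the triangle inequality.

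You skip the layering and coloring. The overlap count ``each block lies in at most $3q$ supports'' enters a single Cauchy--Schwarz inequality. For the trilinear form you replace the H\"older/$\ell^3$ step by extracting $\sup_e\|w_e\|\le1$ and reusing the bilinear estimate. Your version is shorter and needs no combinatorics. The paper's decomposition has the advantage that each subfamily naturally carries its own local bound, which is how \Cref{prop:smooth} gets $\|\nabla^2F\|_{\op}\le\bar L+C\gamma$ rather than a multiple of $\bar L$.

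That points to one caution about your last paragraph, which concerns the application rather than the lemma itself. If you feed the single uniform bound $L=\bar L$ (from $\|A_s\|\le\bar L$) with $q=2$ into the lemma, you get only $6\bar L=\tfrac32L_1$, too weak for $\Lip(\nabla F)\le L_1$. There are two fixes:
\begin{itemize}
\item Treat the block-diagonal quadratic part exactly: its Hessian is $\diag(A_s)$, of norm at most $\bar L$. Then apply the overlap bound only to $\ell_1$ and the couplings, whose local bounds are $O(\gamma)$.
\item Use a weighted version of your own Cauchy--Schwarz, $\sum_e L_e\|u_e\|^2\le\max_s\bigl(\sum_{e\ni s}L_e\bigr)\|u\|^2$. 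This gives $\bar L+O(\gamma)$ directly.
\end{itemize}
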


\begin{proof}
Assign the at most $q$ terms on each support to $q$ layers, and color adjacent edges with two colors. Each layer and color has disjoint supports. For such a family with support projections $P_i$, the third derivative satisfies
\[
 \left|\sum_i D^3G_i[P_ix,P_iy,P_iz]\right|
 \le M_3\sum_i\|P_ix\|\|P_iy\|\|P_iz\|
 \le M_3\|x\|\|y\|\|z\|,
\]
by H\"older's inequality and $\|a\|_3\le\|a\|_2$ for finite sequences. The Hessian bound follows from Cauchy--Schwarz. Summing the at most $3q$ families proves both claims.
\end{proof}

\begin{proposition}[Function-class membership]\label{prop:smooth}
For sufficiently small $c_\gamma$ and $c_N$, every realization of $F$ satisfies
\[
 \Lip(\nabla F)\le L_1,
 \qquad
 \Lip(\nabla^2F)\le L_2,
 \qquad
 F(0)-\inf F\le\Delta.
\]
\end{proposition}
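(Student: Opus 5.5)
We verify the three properties separately, splitting $F$ into local terms that \Cref{lem:overlap} can combine. There are three kinds of terms. The block quadratics $\frac12 z_s^\top A_sz_s$ have Hessian $A_s$, with $\|A_s\|\le\bar L=L_1/4$, and zero third derivative. The first forcing term $-\ell_1(z_1)$ has Hessian at most $C\gamma$ and third derivative at most $C\gamma^2/h$ by \eqref{eq:ellbounds}. The edge couplings $G_s(z_{s-1},z_s)=-\chi(v_{s-1}(z_{s-1}))\ell_s(z_s)$ obey the same scales by \Cref{lem:coupling-smooth}; its hypotheses $\alpha_0\asymp h/\gamma$ and $B=C_BR\asymp h/\gamma$ hold by \eqref{eq:alpha0} and \eqref{eq:vqell}. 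Each block carries at most one quadratic, one outgoing and one incoming edge, so $q$ is an absolute constant.

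\emph{Gradient Lipschitz bound.} The block-diagonal quadratic part has operator norm at most $L_1/4$. By \Cref{lem:overlap} the forcing and coupling terms contribute at most $Cq\gamma$. Since $\gamma=c_\gamma\sqrt{HL_2\eps}$ and the regime \eqref{eq:regime} gives $\sqrt{L_2\eps}\le\sqrt c\,L_1$, we have $Cq\gamma\le L_1/2$ once $c_\gamma$ (or the regime constant $c$) is small. The total is therefore at most $\frac34 L_1\le L_1$.

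\emph{Hessian Lipschitz bound.} The quadratics contribute no third derivative. \Cref{lem:overlap} bounds the rest by $CqM_3$ with $M_3=C\gamma^2/h=Cc_\gamma^2L_2$, so choosing $c_\gamma$ small makes this at most $L_2$. Since $F\in C^\infty$, a bounded third derivative gives the Lipschitz Hessian.

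\emph{Initial gap.} This is the step needing care, because no lower bound on $F$ has been established yet. At the origin, $F(0)=-\ell_1(0)-\sum_s\chi(v_{s-1}(0))\ell_s(0)$, and $v_{s-1}(0)=0$ gives $\chi=0$. Hence $F(0)=-\ell_1(0)=-DC_0$, using $\vartheta(0)=0$. For the infimum, bound each block: $\chi\in[0,1]$ and $0<\ell_s\le CD$, so
\[
F(z)\ge\sum_s\Bigl(\tfrac12 z_s^\top A_sz_s-\ell_s(z_s)\Bigr)\ge -\sum_s \sup\ell_s\ge -CND.
\]
This uses $A_s\succeq\gamma I\succ0$ and the boundedness $|\vartheta|\le K_0+1$ to get $\ell_s\le D(C_0+K_0+1)$. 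The bound holds for every realization, not only on $\good_{\rm wt}$, which is why the bounded saturation is used in the construction. Then
\[
F(0)-\inf F\le CND\le Cc_N\Delta\le\Delta
\]
for $c_N$ small, since $N\le c_N\Delta/D$ by \eqref{eq:MN}.

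The only delicate point is keeping the constant order consistent: $C_0,K_0,C_B$ are fixed first, the derivative constants $C$ from \Cref{lem:primitives,lem:coupling-smooth} are then absolute, and only afterwards are $c_\gamma,c_N$ reduced. This matches the ordering stated in the calibration section.
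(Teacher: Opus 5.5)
Your proposal is correct and follows essentially the same route as the paper: the block-diagonal quadratic part contributes at most $L_1/4$, \Cref{lem:coupling-smooth,lem:overlap} bound the forcing and coupling terms by $C\gamma$ and $C\gamma^2/h=Cc_\gamma^2L_2$, and the gap bound comes from nonnegative quadratics together with $|F(0)|\le CD$ and $F\ge -CND$. Your explicit computation $F(0)=-DC_0$ and your remark on the order in which the constants are fixed are extra detail that is consistent with the paper's calibration.
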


\begin{proof}
Because $\|A_s\|\le\bar L=L_1/4$, \cref{lem:coupling-smooth,lem:overlap} imply
\[
 \|\nabla^2F\|_{\op}\le \bar L+C\gamma\le L_1
\]
under \eqref{eq:regime} and for sufficiently small constants.  Likewise
\[
 \|D^3F\|_{\op}\le C\frac{\gamma^2}{h}=Cc_\gamma^2L_2\le L_2.
\]
All quadratic terms are nonnegative and each negative coupling has magnitude at most $CD$.  Hence $F(z)\ge-CND$ and $|F(0)|\le CD$, so $F(0)-\inf F\le CND\le\Delta$ by choosing $c_N$ small.
\end{proof}

\subsection{No partial-gate stationary points}

Flat gates alone would not suffice: the objective could admit stationary points inside a transition region. The next lemma rules this out by eliminating the outgoing forcing from two scalar projections of the same block gradient.

\begin{lemma}[Near-stationarity forces complete activation]\label{lem:stationarity}
Choose $c_\alpha$ small, then $H$ large, then $K_0,C_0$, and finally $C_B$ sufficiently large, as specified in the proof.  On the event $\good_{\rm wt}$ for every block, every $z$ satisfying $\|\nabla F(z)\|\le\eps$ obeys
\begin{equation}\label{eq:allgates}
 \chi(v_s(z_s))=1\quad(1\le s\le N-1),
 \qquad
 \|A_Nz_N-b_N\|\le\eps.
\end{equation}
Moreover, $\|z_s\|\le B$ for all $s$.
\end{lemma}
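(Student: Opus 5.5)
The proof will be a block-by-block analysis of the gradient. Each block gradient is written explicitly, the norm of $z_s$ is bounded first, and then two scalar projections of the same block equation are used to eliminate the unknown outgoing forcing. Writing $\theta_s:=\vartheta'(q_s(z_s))$ and $\mu_s:=\chi'(v_s(z_s))\,\ell_{s+1}(z_{s+1})\ge0$ (with $\mu_N:=0$), the block gradient is
\[
 g_s:=\nabla_{z_s}F=A_sz_s-c_s\theta_s b_s-\frac{\mu_s}{\alpha_0}\,D\mathcal R_B(z_s)^\top a_s .
\]
The hypothesis $\|\nabla F(z)\|\le\eps$ gives $\|g_s\|\le\eps$ for every $s$.

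\textbf{Step 1: norm bound, with no a priori assumption.} By \cref{lem:primitives}, $|\theta_s|\le1$ and $\|D\mathcal R_B\|\le C$. By \eqref{eq:ellbounds}, $\ell_{s+1}\le CD$, so $\mu_s\le CD$ because $\|\chi'\|_\infty\le C$. Since $A_s\succeq\gamma I$,
\[
 \|z_s\|\le\gamma^{-1}\Bigl(\eps+h+C\tfrac{D}{\alpha_0}\Bigr)\le \frac{C}{c_\alpha}\,\frac h\gamma=\frac{C}{c_\alpha}R .
\]
Here $D/\alpha_0=h/c_\alpha$ and $\eps\le h$. Choosing $C_B$ large after $c_\alpha$ gives $\|z_s\|\le B$, which proves the last claim of the lemma. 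Consequently $\mathcal R_B$ is the identity near $z_s$, and
\[
 z_s=A_s^{-1}\bigl(g_s+c_s\theta_s b_s+\tfrac{\mu_s}{\alpha_0}a_s\bigr).
\]

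\textbf{Step 2: two projections.} Pairing this formula with $a_s$ and with $b_s$, and using the moments from \cref{lem:blockmoments}, gives
\[
 v_s=c_s\theta_s s+\frac{\mu_s D\beta}{\alpha_0^2}\cdot\frac{\alpha_0}{D}\cdot\frac{1}{1}\,+e_v
 \quad\text{i.e.}\quad v_s=c_s\theta_s s+\frac{\mu_s}{D}k+e_v,
\]
\[
 q_s=c_s\theta_s r_0+\frac{\mu_s}{D}\,s+e_q .
\]
The errors are
\[
 |e_v|\le\frac{\|a_s\|\,\|A_s^{-1}g_s\|}{\alpha_0}\le\frac{\eps}{\gamma\alpha_0}=\frac1{c_\alpha H},
 \qquad
 |e_q|\le\frac{h\eps}{\gamma D}=\frac1H .
\]
Both are small once $H$ is large after $c_\alpha$.

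\textbf{Step 3: placing the saturation in its linear region.} Suppose $\mu_s>0$. Then $v_s<1/2$, because $\chi'$ vanishes outside $(1/4,1/2)$. From the $v_s$ equation and $|c_s\theta_s s|\le C$, this gives $\mu_s k/D\le C$, hence $\mu_s/D\le C/c$ since $k\ge c$. If instead $\mu_s=0$, the same bound holds trivially. Substituting into the $q_s$ equation, with $r_0\le1$ and $s\le C$, yields $|q_s|\le C'$ for a numerical constant $C'$ fixed before $K_0$. Choosing $K_0\ge C'$ makes $\vartheta$ linear at $q_s$, so $\theta_s=1$ for every block. (The constant $C_0>K_0+2$ only ensures that $\ell_s$ stays positive.)

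\textbf{Step 4: propagating activation.} With $\theta_s=1$ and $\mu_s\ge0$, the $v_s$ equation gives
\[
 v_s\ge c_s s-\frac{1}{c_\alpha H}\ge c_s-\frac1{c_\alpha H},
\]
using $s\ge1$. The induction starts from $c_1=1$, which gives $v_1\ge1-o(1)>1/2$ and hence $c_2=\chi(v_1)=1$. Repeating the argument, $c_s=1$ for every $s$, and therefore $\chi(v_s)=1$ for all $s\le N-1$. For the final block $\mu_N=0$ and $c_N=\theta_N=1$, so $g_N=A_Nz_N-b_N$, and $\|A_Nz_N-b_N\|\le\eps$ follows.

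\textbf{Main obstacle.} The delicate point is Step 3. The outgoing forcing $\mu_s$ is unknown and can be of order $D$, so it could in principle push $q_s$ out of the linear region of $\vartheta$ or create a stationary point inside a gate transition. The elimination works because $\mu_s$ enters both projections with positive coefficients ($k$ and $s$), and the gate condition $v_s<1/2$ caps it. This step requires the constants to be fixed in the stated order: $c_\alpha$, then $H$, then $K_0,C_0$, then $C_B$. It also requires checking that the moment bounds \eqref{eq:skr-main} are uniform on $\good_{\rm wt}$, which I will take directly from \cref{lem:blockmoments}.
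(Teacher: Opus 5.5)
Your proof is correct and follows the paper's argument. You bound $\|z_s\|$ a priori with $C_B$ chosen last, take the two $A_s^{-1}$-projections, solve the $v_s$-equation for $\mu_s/D$ and substitute into the $q_s$-equation (algebraically the same as the elimination \eqref{eq:eliminate}), and run the same induction from $c_1=1$. Only cosmetic fixes remain: the intermediate factor in the first display of Step 2 is garbled (the coefficient is $\mu_s\beta/\alpha_0^2=(\mu_s/D)k$, as you then write), the constant in Step 1 depends on $K_0,C_0$ through $\ell_{s+1}\le(C_0+K_0+1)D$ (consistent with your stated order), and you should not reuse $s$ for both the block index and $\alpha_s/\alpha_0$.
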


\begin{proof}
The moment bounds hold uniformly once the relative weight error is below a fixed threshold. Fix $c_\alpha$ so that $s_0=\alpha_s/\alpha_0\ge1$ on that event, and fix $H$ large enough for the projection errors below to be at most $1/8$. The elimination calculation below, when the radial map is the identity, gives a bound $|q_s|\le C_*$ independent of $K_0,C_0,C_B$. Choose $K_0>C_*$ and $C_0>K_0+2$. We next choose $C_B$ to ensure that the identity assumption used in that calculation actually holds.

Let $g_s:=\nabla_{z_s}F(z)$. The incoming forcing has norm at most $h$, and the outgoing contribution has norm at most $C(C_0+K_0+1)D/\alpha_0\le C'h$. Here $C'$ is fixed after $K_0,C_0,c_\alpha$ and is independent of $C_B$, because $\|D\mathcal R_B\|$ is uniformly bounded.  Hence
\[
 \gamma\|z_s\|
 \le \|A_sz_s\|
 \le \|g_s\|+C^{\prime}h
 \le \eps+Ch.
\]
Since $h=H\eps$, $\|z_s\|\le Ch/\gamma=CR$.  Taking $C_B$ larger gives $\|z_s\|\le B$, so $\mathcal R_B(z_s)=z_s$ and $D\mathcal R_B(z_s)=I$.

Fix $s$ and abbreviate
\[
 q=q_s(z_s),\quad v=v_s(z_s),\quad
 s_0=\frac{\alpha_s}{\alpha_0},\quad
 k_0=\frac{D\beta_s}{\alpha_0^2},\quad
 r_0=\frac{\rho_s}{D},
\]
where $\alpha_s=a_s^\top A_s^{-1}b_s$, $\beta_s=a_s^\top A_s^{-1}a_s$, and $\rho_s=b_s^\top A_s^{-1}b_s$.  For $s<N$,
\begin{equation}\label{eq:blockgrad}
 g_s=A_sz_s-c_s\vartheta'(q)b_s
 -\frac{D\phi(q_{s+1})\chi'(v)}{\alpha_0}a_s,
 \qquad \phi(t):=C_0+\vartheta(t)\ge1.
\end{equation}
Projecting in the $A_s^{-1}a_s$ and $A_s^{-1}b_s$ directions gives
\begin{align}
 \delta_s^a&:=\frac{a_s^\top A_s^{-1}g_s}{\alpha_0}
 =v-c_ss_0\vartheta'(q)-k_0\phi(q_{s+1})\chi'(v),
 \label{eq:proj-a-main}\\
 \delta_s^b&:=\frac{b_s^\top A_s^{-1}g_s}{D}
 =q-c_sr_0\vartheta'(q)-s_0\phi(q_{s+1})\chi'(v),
 \label{eq:proj-b-main}
\end{align}
with $|\delta_s^a|+|\delta_s^b|\le C\eps/h=C/H$.  For $s=N$ the terms involving $\phi\chi'$ are absent.

If $s=N$ or $\chi'(v)=0$, \eqref{eq:proj-b-main} and the boundedness of $r_0,\vartheta'$ imply $|q|\le C$.  Otherwise $v\in(1/4,1/2)$ and eliminating $\phi(q_{s+1})\chi'(v)$ gives
\begin{equation}\label{eq:eliminate}
 \frac{k_0}{s_0}q
 =v+c_s\left(\frac{k_0r_0}{s_0}-s_0\right)\vartheta'(q)
 +\frac{k_0}{s_0}\delta_s^b-\delta_s^a.
\end{equation}
By \eqref{eq:skr-main}, $k_0/s_0$ is bounded above and below by positive constants, $k_0r_0\ge s_0^2$, and the right-hand side is numerically bounded.  Thus $|q|\le C$ with a constant independent of $K_0,C_0$.  The choice $K_0>C_*$ made above covers this bound.  Then $\vartheta(q_s)=q_s$ and $\vartheta'(q_s)=1$ for every block.

Equation \eqref{eq:proj-a-main} becomes
\[
 v=c_ss_0+k_0\phi(q_{s+1})\chi'(v)+\delta_s^a
 \ge c_ss_0-|\delta_s^a|.
\]
The choices of $c_\alpha,H$ ensure $s_0\ge1$ and $|\delta_s^a|\le1/8$.  Starting with $c_1=1$ gives $v_1\ge7/8$, hence $\chi(v_1)=1$.  Induction gives $\chi(v_s)=1$ for all $s<N$.  In the last block, $c_N=1$ and the saturation is linear, so $g_N=A_Nz_N-b_N$.
\end{proof}

\section{Information Lower Bound}

The quadratic matrices are visible from the first query, so a sequential argument must survive arbitrary preprocessing. We give the algorithm additional information on a fixed calendar: each right-hand side is revealed for free, and its entire rotation is disclosed after $m$ more rounds. Before disclosure, a high-multiplicity argument keeps that block's gate flat. After disclosure, the oracle can compute its gate gradient exactly. This separates the period in which hiding is needed from the period in which the readout direction is used in a response.

\subsection{A scheduled oracle with full disclosure}

Write $A_s=U_s\Lambda U_s^\top$, $b_s=hU_su_s$, where the $U_s$ are independent Haar rotations and the $u_s$ are independent uniform unit vectors, also independent of all rotations. This has the block distribution specified in Section~5. Fix deterministic times
\[
 \tau_s:=1+(s-1)m,\qquad \sigma_s:=1+sm.
\]
Block $s$ is \emph{unrevealed} before $\tau_s$, \emph{active} for $\tau_s\le t<\sigma_s$, and \emph{retired} for $t\ge\sigma_s$. These words describe the information supplied by the auxiliary oracle, not the value of the true gate.

\begin{definition}[Scheduled disclosure oracle]\label{def:scheduled}
The spectrum $\Lambda$ and block partition are public. Before query $t$, disclose $(u_s,b_s)$ whenever $t=\tau_s$, and disclose $U_s$ whenever $t=\sigma_s$. A query $z_t=(z_{s,t})_{s=1}^N$ returns $(A_sz_{s,t},U_s^\top z_{s,t})$ for every block. It also returns $(F_t(z_t),\nabla F_t(z_t))$, where
\begin{equation}\label{eq:censored-function}
 F_t(z):=\sum_{s=1}^N\frac12z_s^\top A_sz_s-\ell_1(z_1)
 -\sum_{s=2}^{k(t)}\chi(v_{s-1}(z_{s-1}))\ell_s(z_s),
 \qquad k(t):=\min\{N,1+\lfloor(t-1)/m\rfloor\}.
\end{equation}
An empty sum is zero. All disclosures precede selection of the query at that round.
\end{definition}

The last response in this definition is computable from the matrix-vector responses and the disclosed data. Each included coupling has a retired predecessor, whose $U_{s-1}$ and $u_{s-1}$ determine the \emph{physical} vector $a_{s-1}$. Its successor's $b_s$ is already disclosed. Thus both the scalar gate and its vector derivative are available, including outside the identity region of $\mathcal R_B$. The quadratic value uses $z_s^\top(A_sz_s)/2$. No undisclosed $a_s$ is used. We will prove equality with the true first-order response on a high-probability event; no unconditional domination claim is needed for the censored response.

\subsection{The information span before retirement}

\begin{lemma}[Scheduled adaptive-to-Krylov reduction]\label{lem:marked}
Fix a deterministic policy for \cref{def:scheduled}, a horizon $T$, and $d_0>C(T+1)^2$. For each block $s$, there exist vectors $g_{s,1},\ldots,g_{s,T}$ jointly distributed as independent standard Gaussians and independent of $u_s$, such that, for $t\le T$ and $t<\sigma_s$,
\begin{equation}\label{eq:forest}
 U_s^\top z_{s,t}\in
 \spanop\{\Lambda^jg_{s,i}:1\le i\le t,\ 0\le j\le t-i\}
 +\1_{\{t\ge\tau_s\}}\spanop\{\Lambda^ju_s:0\le j\le t-\tau_s\}.
\end{equation}
The joint Gaussian and independence assertions also hold conditional on all raw draws of the other blocks. Independence between the Gaussian families for different blocks is not asserted or needed.
\end{lemma}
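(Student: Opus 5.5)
The plan is to prove \eqref{eq:forest} by induction on $t$, working in the rotated coordinates $w_{s,t}:=U_s^\top z_{s,t}$. The Gaussians $g_{s,t}$ will be built one at a time from the component of $w_{s,t}$ that is genuinely new, completed with auxiliary independent randomness (to be absorbed into the probability space) so that it is \emph{exactly} standard Gaussian. Fix a block $s$ and condition throughout on all raw draws $(U_{s'},u_{s'})_{s'\ne s}$. The first point to verify is that, for $t<\sigma_s$, the responses of \cref{def:scheduled} depend on $(U_s,u_s)$ only through three things: the products $A_sz_{s,t}=U_s\Lambda w_{s,t}$ and $w_{s,t}$ themselves; the disclosed $b_s=hU_su_s$ once $t\ge\tau_s$; and $F_t,\nabla F_t$. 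For the last of these we use that $F_t$ involves $b_s$ through $\ell_s$ only after $\tau_s$, and involves $a_s$ only through the coupling $\chi(v_s)\ell_{s+1}$. That coupling is included only when $k(t)\ge s+1$, i.e.\ $t\ge\sigma_s$. So before retirement no undisclosed function of $U_s$ beyond these linear images enters.

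Next, define the inductive state. Let
\[
 W_t:=\spanop\{\Lambda^jg_{s,i}:1\le i\le t,\ 0\le j\le t-i\}+\1_{\{t\ge\tau_s\}}\spanop\{\Lambda^ju_s:0\le j\le t-\tau_s\},
\]
and let $E_t:=W_t+\Lambda W_t$, which is contained in the analogous span with all degrees raised by one. The induction hypothesis is the following. The transcript through round $t$ (the policy is deterministic) is a measurable function of the other blocks, of $g_{s,1},\dots,g_{s,t}$, of $u_s$ (if $t\ge\tau_s$), and of the restricted map $U_s|_{E_t}$. Moreover, conditionally on this transcript, $U_s$ is Haar-distributed on the coset of rotations with the prescribed values on $E_t$. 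At round $t+1$ the query $z_{s,t+1}$ is a function of the transcript. If $t+1=\tau_s$, we first adjoin $u_s$: it is uniform and independent of $U_s$, and the disclosure $b_s=hU_su_s$ is a value of $U_s$ on $\spanop\{u_s\}$. Now split $z_{s,t+1}=U_sx+y$, where $x\in E_t$ is known and $y\perp U_sE_t$ is a known vector. By the conditional Haar property, $U_s^\top y$ is a uniformly random direction of norm $\|y\|$ in $E_t^\perp$. We then define $g_{s,t+1}$ as follows:
\begin{itemize}[leftmargin=2em]
\item its $E_t^\perp$-part equals an independent chi radius (with $\dim E_t^\perp$ degrees of freedom) times $U_s^\top y/\|y\|$, using a fresh uniform direction if $y=0$;
\item its $E_t$-part is an independent standard Gaussian.
\end{itemize}
Conditionally on the past, this $g_{s,t+1}$ is standard Gaussian, so the family is i.i.d.\ Gaussian. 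It is independent of $u_s$ because the construction only ever uses the Haar symmetry of $U_s$ on the orthogonal complement of spaces already containing $u_s$. Finally, $w_{s,t+1}=x+U_s^\top y\in E_t+\spanop\{g_{s,t+1}\}\subseteq W_{t+1}$, and the new responses $U_s\Lambda w_{s,t+1}$ and $w_{s,t+1}$ reveal $U_s$ only on $E_{t+1}$. This closes the induction.

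The dimension hypothesis $d_0>C(T+1)^2$ guarantees $\dim E_t\le (t+1)^2+1<d_0$. Hence the complement $E_t^\perp$ is nontrivial and the conditional-uniformity statement is meaningful at every step.

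I expect the main obstacle to be the careful bookkeeping of the conditional Haar law. One must check that everything revealed up to round $t$ (the vectors $\Lambda w$, the disclosed $b_s$, and the scalar and gradient values of $F_t$) is a function of $U_s$ restricted to $E_t$ together with the other blocks, so that the remaining freedom is exactly a Haar rotation of $E_t^\perp$. Particular care is needed at $\tau_s$, where $u_s$ is introduced, and in confirming that $\ell_s$, $\vartheta$ and $\mathcal R_B$ enter only through already-known linear images. The statement holds only up to $\sigma_s$, since at $\sigma_s$ all of $U_s$ is disclosed. Independence across different blocks is not needed: each construction is carried out conditionally on the other blocks' raw draws, exactly as the lemma asserts.
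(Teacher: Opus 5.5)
Your proposal is correct and follows essentially the same route as the paper. You condition on the other blocks' raw draws to get a single-block oracle with a fixed-time disclosure, maintain a conditional Haar invariant for the unexposed part of $U_s$, adjoin $u_s$ at the deterministic time $\tau_s$, and build each $g_{s,t}$ as an independent Gaussian on the known subspace plus an independent chi radius times the fresh latent direction, after which the degree count gives \eqref{eq:forest}. The differences are cosmetic: you condition on $U_s$ over the full span $E_t=W_t+\Lambda W_t$ rather than only the exposed span of the $U_s^\top z_{s,i}$, $\Lambda U_s^\top z_{s,i}$ and $u_s$, which is harmless since $E_t$ is determined by the past and conditioning a Haar isometry on its restriction to any such subspace leaves it Haar on the complement; and before $\tau_s$ the independence from $u_s$ holds simply because nothing constructed so far uses $u_s$, not because $u_s$ already lies in the conditioning spaces.
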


\begin{proof}
Fix $s$ and condition on $\{(U_i,u_i):i\ne s\}$. These draws are independent of $(U_s,u_s)$. Until $\sigma_s$, every response involving other blocks is a deterministic function of the queries and these fixed draws. By \eqref{eq:censored-function}, the current block contributes only its matrix products before $\tau_s$, and its matrix products and disclosed $b_s$ after $\tau_s$. Its outgoing gate is omitted until $\sigma_s$. Consequently the full query strategy, including all feedback from other blocks, is a deterministic adaptive strategy for the single-block oracle $(Ax,U^\top x)$ with a fixed-time disclosure $(u,hUu)$. This reduction conditions on raw independent draws, not on other blocks' adaptive transcripts.

Suppress $s$. Let $\mathscr H_{t-1}$ contain the single-block history and previously constructed Gaussian innovations, and let $\cL_{t-1}$ be the span of the latent vectors whose images under $U$ have been exposed: previous $\xi_i:=U^\top z_i$, $\Lambda\xi_i$, and $u$ if disclosed. Conditional on this information, the unused restriction
\[
 U:\cL_{t-1}^{\perp}\longrightarrow(U\cL_{t-1})^{\perp}
\]
is Haar. Before disclosure this statement remains true on additionally conditioning on $u$, since the history and the construction so far do not use $u$.

At the start of round $\tau_s$, $u$ is still an independent uniform vector. Disclosing $(u,hUu)$ fixes the image of its component outside $\cL_{t-1}$; adjoining $u$ to $\cL_{t-1}$ preserves conditional Haar randomness on the remaining complement. The disclosure time is deterministic, so no conditioning on an event defined by the hidden vector occurs.

Now decompose the chosen physical query as $z_t=z_t^{\parallel}+z_t^{\perp}$ relative to $U\cL_{t-1}$. If $z_t^{\perp}\ne0$, its latent unit direction
\[
 e_t:=U^\top z_t^{\perp}/\|z_t^{\perp}\|
\]
is conditionally uniform in $\cL_{t-1}^{\perp}$. Independently sample $a_t\sim N(0,P_{\cL_{t-1}})$ and a chi radius $R_t$ with $d_0-\dim\cL_{t-1}$ degrees of freedom, and set $g_t=a_t+R_te_t$. Its conditional law is $N(0,I)$, also conditional on $u$. If $z_t^{\perp}=0$, use a fresh independent standard Gaussian instead. Thus
\[
 \xi_t\in\cL_{t-1}+\spanop\{g_t\}.
\]
The response exposes the images of $\xi_t$ and $\Lambda\xi_t$. The innovation adds no information about the remaining action of $U$: in the nonzero case its only rotation-dependent part is $e_t$, whose image is the already fixed $z_t^{\perp}/\|z_t^{\perp}\|$; its other randomness is independent. In the zero case the whole innovation is independent. The conditional Haar invariant therefore holds after adjoining $\xi_t,\Lambda\xi_t$.

Induction shows that a start introduced at round $i$ has degree at most $t-i$ in query $t$, and the marked start $u$ has degree at most $t-\tau_s$. The conditional Gaussian laws prove joint independence of the innovations and independence from $u$. After retirement, pad the family to length $T$ with independent Gaussians; no span assertion is made then. This proves the lemma. The underlying conditional-rotation induction is the one used for matrix-vector lower bounds \citep{Shamir2026Simple}; the explicit schedule and retirement specify the additional information allowed here.
\end{proof}

\subsection{High multiplicity defeats preprocessing}

All spaces in this subsection are in latent coordinates. For one block, suppress $s$ and set
\begin{equation}\label{eq:spaces}
 \cV:=\operatorname{cyc}_{\Lambda}(\spanop\{g_1,\ldots,g_{T_0}\}),\qquad
 \cK_m:=\spanop\{u,\Lambda u,\ldots,\Lambda^{m-1}u\},\qquad
 \widetilde a:=\frac{r(\Lambda)u}{\|r(\Lambda)u\|}.
\end{equation}
Let $\Pi_j$ be the fixed projector onto the $\lambda_j$ eigenspace of $\Lambda$, and define
\[
 W_j:=\spanop\{\Pi_jg_1,\ldots,\Pi_jg_{T_0}\},\qquad
 u_j:=\Pi_ju/\|\Pi_ju\|.
\]
The zero denominators occur with probability zero. Conditional on the weights $\|\Pi_ju\|^2$ and the Gaussian starts, the $u_j$ are independent uniform directions in their respective eigenspaces. This follows from \cref{lem:marked}; it does not condition on the physical matrix $A$.

\begin{lemma}[Preprocessing-resistant projection bound]\label{lem:highmult}
Fix small numerical $\eta,\delta>0$ and a deterministic scheduled policy. The multiplicities can be chosen independently of the policy so that, with probability at least $1-\zeta$, simultaneously for all blocks:
\begin{enumerate}[label=(\roman*),leftmargin=2.5em]
\item the weight event $\good_{\rm wt}$ holds;
\item $\max_j\|P_{W_j}u_j\|\le\eta$;
\item every $w\in\cK_m+\cV$ satisfies
\begin{equation}\label{eq:corr}
 |\widetilde a^\top w|\le C(\delta+\eta)\|w\|.
\end{equation}
\end{enumerate}
It suffices to take
\[
 K_j\ge C[T_0^2+\eta^{-2}T_0+\delta^{-2}]\log(2JN/\zeta),\qquad
 |K_j/d_0-\nu_j|\le(\delta/4)\nu_j.
\]
\end{lemma}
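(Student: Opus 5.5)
Fix one block and work in latent coordinates, where $\Lambda$ is diagonal and fixed. By \cref{lem:marked}, the Gaussian starts $g_1,\ldots,g_{T_0}$ are independent standard Gaussians and independent of $u$; conditioning on other blocks' raw draws keeps this. The plan has three steps: (i) the weight event; (ii) a per-eigenspace projection bound, conditional on the starts; (iii) the correlation bound \eqref{eq:corr}, obtained by splitting $w$ eigenspace by eigenspace. A union bound over $j\le J$ and $s\le N$ at the end explains the factor $\log(2JN/\zeta)$.

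\textbf{Step (i).} Since $u$ is uniform on the sphere, $\omega_j=\|\Pi_ju\|^2$ is Dirichlet$(K_j/2)$. This equals $\chi^2_{K_j}/\sum_i\chi^2_{K_i}$. Chi-square concentration gives $|\omega_j/(K_j/d_0)-1|\le\delta/4$ with failure probability $\exp(-c\delta^2K_j)$. Combined with $|K_j/d_0-\nu_j|\le(\delta/4)\nu_j$, this yields $\good_{\rm wt}$ with relative accuracy of order $\delta$. Here $K_j\gtrsim\delta^{-2}\log(2JN/\zeta)$ suffices.

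\textbf{Step (ii).} Condition on the starts and on the weights. Then $u_j$ is uniform on the unit sphere of the $K_j$-dimensional eigenspace. It is independent of $W_j$, which has dimension at most $T_0$. Therefore $\|P_{W_j}u_j\|^2$ is Beta$(\dim W_j/2,(K_j-\dim W_j)/2)$, with mean at most $T_0/K_j$. Concentration for projections of a uniform vector onto a fixed subspace gives
\[
\|P_{W_j}u_j\|^2\le C\bigl(T_0+\log(2JN/\zeta)\bigr)/K_j
\]
with the required probability. This is at most $\eta^2$ once $K_j\ge C\eta^{-2}(T_0+\log)$. The $T_0^2$ term in the hypothesis is reserved for the condition $d_0>C(T+1)^2$ of \cref{lem:marked}.

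\textbf{Step (iii).} This is the main obstacle. The cyclic space decomposes as $\cV=\bigoplus_jW_j$, because $\Lambda$ acts as a scalar on each eigenspace. Likewise $\cK_m=\{p(\Lambda)u:p\in\mathcal P_{m-1}\}$ has eigenspace components $p(\lambda_j)\|\Pi_ju\|u_j$. Write $w=p(\Lambda)u+v$ with $v\in\cV$, and decompose
\[
v_j=\theta_ju_j^{\perp W}+P_{W_j}(\cdot),
\]
where $u_j^{\perp W}$ is the part of $u_j$ orthogonal to $W_j$. Now
\[
\widetilde a^\top w=\sum_j r(\lambda_j)\|\Pi_ju\|\,u_j^\top w_j\big/\|r(\Lambda)u\|.
\]
Each $u_j^\top w_j$ splits as the term $p(\lambda_j)\|\Pi_ju\|$, plus an error of size at most $\eta\|v_j\|$ from $u_j^\top v_j=(P_{W_j}u_j)^\top v_j$.

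The first contribution is $\langle r,p\rangle_\omega/\|r\|_\omega$. Exactly as in \cref{lem:blockmoments}, it is bounded by $C\delta\|p\|_\omega=C\delta\|p(\Lambda)u\|$. The second contribution is bounded via Cauchy--Schwarz by $\eta\|v\|$.

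The difficulty is that $\|p(\Lambda)u\|+\|v\|$ need not be comparable to $\|w\|$, because $v$ could cancel $p(\Lambda)u$. To handle this, I would first show that the two subspaces are nearly orthogonal per eigenspace. Both $W_j$ and $u_j$ live in eigenspace $j$, and the cosine of the angle between $W_j$ and the line through $u_j$ is $\|P_{W_j}u_j\|\le\eta$. Hence $\|w_j\|^2\ge(1-\eta)(|c_j|^2+\|v_j\|^2)$, where $c_j$ is the $u_j$-coefficient. Summing over $j$ gives $\|p(\Lambda)u\|+\|v\|\le C\|w\|$ for $\eta\le1/2$. This yields \eqref{eq:corr}.

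Finally, I would take a union bound over the $J$ eigenspaces and the $N$ blocks. The multiplicity choices never reference the policy, since only $\dim W_j\le T_0$ is used.
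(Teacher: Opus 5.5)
Your proposal is correct and follows the paper's proof essentially step for step. The shared steps are: chi-square concentration for the Dirichlet weights; the conditional Beta law for $\|P_{W_j}u_j\|^2$, with a union bound over eigenspaces and blocks; and the split $w=p(\Lambda)u+v$, controlling the Krylov part by the latent form of \eqref{eq:approx-orth} and the rest by $\eta$-near-orthogonality of $\operatorname{cyc}_\Lambda(u)$ to $\bigoplus_jW_j$. Your eigenspace-by-eigenspace computation is the same as the paper's bound $\|P_{\widehat\cV}x\|\le\eta\|x\|$. Two small remarks: you leave implicit that integer multiplicities meeting both the size and ratio conditions exist (round $N_0\nu_j$ for a large common scale $N_0$), and your equality $\cV=\bigoplus_jW_j$ is true because each $\Pi_j$ is a polynomial in $\Lambda$, though only the containment is needed.
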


\begin{proof}
Since all $\nu_j$ are positive, simultaneous relative approximation and the stated minimum multiplicities follow by taking a sufficiently large common scale and rounding its multiples of $\nu_j$. Normalized independent Gaussian block norms give the weight event by chi-square concentration, with total failure probability at most $\zeta/2$.

Conditional on the starts and weights in latent coordinates, $W_j$ has dimension at most $T_0$ and $u_j$ is uniform on a $K_j$-dimensional sphere. The squared projection has the beta law with parameters $\dim(W_j)/2$ and $(K_j-\dim(W_j))/2$ (with the zero-dimensional case trivial). The chi-square representation gives
\[
 \Prob\{\|P_{W_j}u_j\|>\eta\}\le2e^{-cK_j\eta^2}
 \quad\text{if }K_j\ge C\eta^{-2}T_0.
\]
A union bound over eigenspaces and blocks costs at most $\zeta/2$. The conditional version of \cref{lem:marked} supplies this estimate for each block regardless of the other draws; independence of the block events is unnecessary. The $T_0^2$ slack also ensures enough unused dimension in that lemma.

Set $\widehat\cV:=\bigoplus_jW_j$, which contains $\cV$. For $x\in\operatorname{cyc}_{\Lambda}(u)$, write $\Pi_jx=d_ju_j$. Then
\[
 \|P_{\widehat\cV}x\|^2=\sum_jd_j^2\|P_{W_j}u_j\|^2\le\eta^2\|x\|^2.
\]
In particular this holds for $x=\widetilde a$ and for every $x\in\cK_m$. Rotating \eqref{eq:approx-orth} into latent coordinates gives $|\widetilde a^\top k|\le C\delta\|k\|$ for $k\in\cK_m$. If $w=k+v$ with $v\in\cV$, then $|\langle k,v\rangle|\le\eta\|k\|\|v\|$, so
\[
 \|w\|^2\ge(1-\eta)(\|k\|^2+\|v\|^2),\qquad
 |\widetilde a^\top w|\le C\delta\|k\|+\eta\|v\|.
\]
These inequalities imply \eqref{eq:corr}.
\end{proof}

After fixing the geometric constants, choose $\delta,\eta$ sufficiently small that
\begin{equation}\label{eq:flat-margin}
 2BC(\delta+\eta)\le\alpha_0/4.
\end{equation}
The ratio $B/\alpha_0$ is a fixed numerical constant, so these choices are independent of the target problem parameters.

\begin{corollary}[Readout barrier before retirement]\label{cor:freshbarrier}
On the event of \cref{lem:highmult}, every scheduled query at a round $t<\sigma_s$ satisfies
\[
 |v_s(z_{s,t})|=\left|\frac{a_s^\top\mathcal R_B(z_{s,t})}{\alpha_0}\right|\le\frac14.
\]
Hence the gate and all its derivatives vanish at that query.
\end{corollary}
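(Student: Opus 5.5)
We work in latent coordinates for block $s$. Since $a_s=U_s\widetilde a$ (the rotation $U_s$ maps $r(\Lambda)u_s$ to $r(A_s)b_s/h$), we have $a_s^\top \mathcal R_B(z_{s,t})=\widetilde a^\top U_s^\top\mathcal R_B(z_{s,t})$. The radial form of \cref{lem:primitives}(iii) gives $\mathcal R_B(z_{s,t})=r_B(\|z_{s,t}\|)z_{s,t}$, and orthogonality gives $\|U_s^\top z_{s,t}\|=\|z_{s,t}\|$. Together these yield
\[
 |a_s^\top\mathcal R_B(z_{s,t})|=r_B(\|z_{s,t}\|)\,|\widetilde a^\top\xi_{s,t}|,
 \qquad \xi_{s,t}:=U_s^\top z_{s,t}.
\]
So it suffices to place $\xi_{s,t}$ in the space $\cK_m+\cV$ of \cref{lem:highmult} and then use \eqref{eq:corr}.

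\textbf{Placing $\xi_{s,t}$ in $\cK_m+\cV$.} For $t<\sigma_s$, and with horizon $T=T_0$ (queries after $T_0$ are irrelevant), \cref{lem:marked} puts $\xi_{s,t}$ in the span \eqref{eq:forest}. The Gaussian part $\spanop\{\Lambda^jg_{s,i}\}$ lies in $\operatorname{cyc}_\Lambda(\spanop\{g_1,\ldots,g_{T_0}\})=\cV$. The marked part is present only when $t\ge\tau_s$. In that case it is spanned by $\Lambda^ju_s$ with $j\le t-\tau_s\le\sigma_s-1-\tau_s=m-1$, so it lies in $\cK_m$. Hence $\xi_{s,t}\in\cK_m+\cV$. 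One should check that the vectors $u,\Lambda u,\ldots$ appearing in $\cK_m$ are the same $u_s$ used to define $\widetilde a$; this holds by construction.

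\textbf{Bounding the readout.} Applying \eqref{eq:corr} gives
\[
 |a_s^\top\mathcal R_B(z_{s,t})|\le C(\delta+\eta)\,r_B(\|z_{s,t}\|)\|z_{s,t}\|=C(\delta+\eta)\|\mathcal R_B(z_{s,t})\|\le 2BC(\delta+\eta).
\]
This bound is uniform in $\|z_{s,t}\|$ because $\|\mathcal R_B\|\le 2B$. By \eqref{eq:flat-margin} the right-hand side is at most $\alpha_0/4$, so $|v_s(z_{s,t})|\le 1/4$. Since $\chi\equiv0$ on $(-\infty,1/4]$, the gate vanishes at this query. The same holds for all derivatives of the coupling term in $z_{s+1}$. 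For the derivatives in $z_s$ there is a boundary case: $v=1/4$ exactly lies on the closed boundary of the flat region, where all derivatives of $\chi$ still vanish because $\chi$ is $C^\infty$ and identically zero to the left.

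\textbf{Main obstacle.} The delicate point is ensuring that the degree count from \cref{lem:marked} stays below $m$ at every round before retirement, i.e.\ the off-by-one between $\tau_s$, $\sigma_s$ and the definition of $\cK_m$. We resolve it with the explicit computation $t-\tau_s\le m-1$ above. A secondary point is that \cref{lem:highmult} must be applied for the fixed deterministic policy, with the Gaussian family indexed up to $T_0$, so that $\cV$ does not depend on $t$.
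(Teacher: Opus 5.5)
Your proposal is correct and follows the paper's proof. Like the paper, you rotate into latent coordinates via $a_s=U_s\widetilde a$ and use \cref{lem:marked} with $t-\tau_s\le m-1$ to place $U_s^\top z_{s,t}$ in $\cK_m+\cV$. You then combine the radial form of $\mathcal R_B$, the bound $\|\mathcal R_B\|\le 2B$, \eqref{eq:corr} and \eqref{eq:flat-margin}, and obtain flatness at the endpoint $1/4$ from smoothness of $\chi$. You only spell out details the paper's three-line proof leaves implicit, such as $r(A_s)b_s=hU_s r(\Lambda)u_s$ and the horizon bookkeeping.
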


\begin{proof}
Before $\tau_s$, \cref{lem:marked} places $U_s^\top z_{s,t}$ in $\cV$; for $\tau_s\le t<\sigma_s$ it lies in $\cK_m+\cV$, since $t-\tau_s\le m-1$. The radial map commutes with rotations and preserves the direction of its argument. Its norm is at most $2B$, so \cref{eq:corr,eq:flat-margin} prove the bound. Flatness includes the endpoint $1/4$ by smoothness.
\end{proof}

\subsection{Coupling to the true transcript}

Run any deterministic first-order policy against the scheduled oracle, feeding it only the censored value and gradient. The auxiliary responses and disclosures are available to the analysis but may be ignored by the policy. Construct the Gaussian families and the event of \cref{lem:highmult} along this entire scheduled run before comparing it to the true run.

\begin{lemma}[First-exit identity]\label{lem:firstexit}
On that event, the true and censored value-gradient transcripts of the policy coincide through $T_0$.
\end{lemma}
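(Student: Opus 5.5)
The plan is an induction on the query round $t\le T_0$. The hypothesis is that the first $t-1$ true and censored value-gradient responses coincide. Because the policy is deterministic and is fed only the censored value and gradient, this makes the $t$-th query $z_t$ the same in both runs. It therefore suffices to show $F(z_t)=F_t(z_t)$ and $\nabla F(z_t)=\nabla F_t(z_t)$ at every query of the scheduled run. Since the scheduled run is fixed first, with the event of \cref{lem:highmult} constructed along it, the argument never conditions on the true run. That avoids any stopping-time issue.

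First I compare the two functions. By \eqref{eq:F} and \eqref{eq:censored-function},
\[
F(z)-F_t(z)=-\sum_{s=k(t)+1}^{N}\chi\bigl(v_{s-1}(z_{s-1})\bigr)\ell_s(z_s).
\]
Each omitted coupling has index $s-1\ge k(t)$. Using $k(t)=\min\{N,1+\lfloor (t-1)/m\rfloor\}$, the condition $s-1\ge k(t)$ gives $s-1\ge 1+\lfloor(t-1)/m\rfloor$, so $(s-1)m\ge t$. This yields $t<1+(s-1)m=\sigma_{s-1}$, meaning block $s-1$ is not yet retired at round $t$. In the range $k(t)=N$ there are no omitted terms and the claim is trivial.

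Second, I apply \cref{cor:freshbarrier} to block $s-1$ at the query $z_t$ of the scheduled run. It gives $|v_{s-1}(z_{s-1,t})|\le 1/4$. Since $\chi$ vanishes identically on $(-\infty,1/4]$, the gate and all its derivatives vanish at that point. The gradient of an omitted term is
\[
-\chi'(v_{s-1})\,\ell_s\,\nabla v_{s-1}\;-\;\chi(v_{s-1})\,\nabla\ell_s .
\]
Both pieces carry a factor $\chi$ or $\chi'$ evaluated at $v_{s-1}$, so both vanish. Hence the value and the full gradient of every omitted term are zero at $z_t$. The responses agree at round $t$, which closes the induction. Summing over $s$ is harmless since the sum is finite.

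The main obstacle is a hidden circularity. \cref{cor:freshbarrier} concerns queries of the \emph{scheduled} run, and it relies on the span description in \cref{lem:marked}, which holds only for scheduled queries. I handle this by doing the induction inside the scheduled run alone: every $z_t$ is defined there, and the true oracle is merely evaluated at those points. Agreement then shows a posteriori that the true run produces the same queries. I also need to check that the endpoint case $v=1/4$ is flat; this holds by smoothness together with the support condition in \cref{lem:primitives}(i).
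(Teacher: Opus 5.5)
Your proof is correct and follows essentially the same route as the paper. You induct on rounds inside the scheduled run, and you check that every omitted coupling at round $t$ has a not-yet-retired predecessor ($t\le (s-1)m<\sigma_{s-1}$, the paper's ``each such block has $t<\sigma_s$''); then \cref{cor:freshbarrier} kills that term's value and both block-gradient contributions, so equal past responses force equal next queries and responses.
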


\begin{proof}
At round $t$, the omitted terms in \eqref{eq:censored-function} have predecessors $s\ge k(t)$ with $s<N$. Each such block has $t<\sigma_s$. By \cref{cor:freshbarrier}, its gate and first derivative vanish at the scheduled query, so the omitted term contributes neither value nor any block gradient. Thus the true and censored responses agree there. Initially the policies have the same history. Inductively, equality of past responses gives equality of the next queries and hence equality of the next responses. This proves the assertion without conditioning on a hidden-vector-dependent reveal event.
\end{proof}

\begin{lemma}[Sequential activation and final solve]\label{lem:sequential}
On the simultaneous good event, block $s$ cannot have a nonzero gate at any queried point before $\sigma_s=1+sm$. No query at $t<\sigma_N=1+Nm$ is $\eps$-stationary.
\end{lemma}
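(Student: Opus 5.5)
The plan is to work on the simultaneous good event of \cref{lem:highmult}, on which \cref{lem:firstexit} already identifies the true transcript with the scheduled one through $T_0$. The first claim of \cref{lem:sequential} is then a restatement of \cref{cor:freshbarrier}, transferred to the true run. Every true query $z_t$ with $t\le T_0$ equals the corresponding scheduled query. For each block $s$ and each round $t<\sigma_s$, the corollary gives $|v_s(z_{s,t})|\le 1/4$, so $\chi(v_s(z_{s,t}))=0$. Hence block $s$ has a zero gate, with all derivatives vanishing, at every queried point before $\sigma_s=1+sm$.

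For the second claim, take a query $z_t$ with $t<\sigma_N$ and argue by contradiction, assuming $\|\nabla F(z_t)\|\le\eps$. The event of \cref{lem:highmult} contains $\good_{\rm wt}$ for every block. Therefore \cref{lem:stationarity} applies and yields $\chi(v_s(z_{s,t}))=1$ for all $1\le s\le N-1$, together with $\|A_Nz_{N,t}-b_N\|\le\eps$. Since $t<1+Nm$, let $s^\ast:=\lceil t/m\rceil$, the smallest $s$ with $t<\sigma_s$. If $s^\ast\le N-1$, the first claim gives $\chi(v_{s^\ast}(z_{s^\ast,t}))=0$, which contradicts full activation.

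The main obstacle is the remaining case $s^\ast=N$, that is, $\sigma_{N-1}\le t<\sigma_N$. Block $N$ has no outgoing gate, so the contradiction must instead come from the final solve $\|A_Nz_{N,t}-b_N\|\le\eps$. I would use that $z_{N,t}$ lies in $U_N(\cK_m+\cV)$ by \cref{lem:marked}, since $t-\tau_N\le m-1$. \Cref{lem:highmult}(iii) then gives $|a_N^\top z_{N,t}|\le C(\delta+\eta)\|z_{N,t}\|$. \Cref{lem:stationarity} bounds $\|z_{N,t}\|\le B$, so $|a_N^\top z_{N,t}|\le\alpha_0/8$ by \eqref{eq:flat-margin}. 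On the other hand, $a_N^\top A_N^{-1}b_N=\alpha_N\ge\alpha_0$ by \eqref{eq:skr-main}, and
\[
|a_N^\top z_{N,t}-\alpha_N|=|a_N^\top A_N^{-1}(A_Nz_{N,t}-b_N)|\le\eps/\gamma .
\]
Since $\eps/\gamma=\alpha_0/(c_\alpha H)\le\alpha_0/8$ for $H$ large, we get $a_N^\top z_{N,t}\ge 7\alpha_0/8$, a contradiction. This uses the readout vector $a_N$ of the last block only in the analysis, which is legitimate because the barrier estimate holds for every block, including $s=N$.
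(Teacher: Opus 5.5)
Your proof is correct and follows the paper's argument. The gate claim comes from \cref{cor:freshbarrier} transferred to the true run by \cref{lem:firstexit}. Early rounds are excluded because a closed gate contradicts \cref{lem:stationarity}. In the last-block window, $a_N^\top z_{N,t}\ge\alpha_N-\eps/\gamma$ is played against the pre-retirement readout barrier for block $N$. The differences are cosmetic. You take the first unretired block $s^\ast$ rather than block $N-1$, and you apply \cref{lem:highmult}(iii) directly to $z_{N,t}$ using $\|z_{N,t}\|\le B$. The paper instead passes through $v_N$, checking that $\mathcal R_B$ is the identity there.
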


\begin{proof}
The gate claim follows from \cref{lem:firstexit,cor:freshbarrier}. If $t<\tau_N=\sigma_{N-1}$, gate $N-1$ is flat, contrary to \cref{lem:stationarity}. For $\tau_N\le t<\sigma_N$, stationarity would require $\|A_Nz_{N,t}-b_N\|\le\eps$. This implies
\[
 a_N^\top z_{N,t}
 \ge\alpha_N-\eps/\gamma\ge3\alpha_0/4,
 \qquad
 \|z_{N,t}\|\le(h+\eps)/\gamma<B,
\]
after the fixed choices of $c_\alpha,H,C_B$. Thus $\mathcal R_B$ is the identity and $v_N(z_{N,t})\ge3/4$, contradicting the readout barrier for the still active last block. All inequalities apply to the scheduled run and transfer to the true run by \cref{lem:firstexit}.
\end{proof}

\section{Proof of the Main Theorem}

The preceding results now connect three statements: every sampled function is admissible, stationarity requires completion of the cascade, and each stage incurs a fresh information cost. The distributional bound below makes the final quantifier order explicit.

\begin{proposition}[Distributional lower bound]\label{prop:distlower}
There is an absolute constant $c>0$ such that every deterministic exact first-order algorithm using fewer than $cNM$ calls fails, with probability at least $2/3$ over the hard distribution, to query an $\eps$-stationary point.
\end{proposition}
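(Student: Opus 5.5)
The plan is to assemble \cref{lem:highmult,lem:firstexit,lem:sequential} into a statement about a single deterministic policy, with the failure event controlled by $\zeta$. Fix a deterministic exact first-order policy $\mathsf A$ making fewer than $cNM$ calls, with $c$ chosen so that $cNM\le \sigma_N-1=Nm$. Since $m=\lfloor M/2\rfloor\ge M/4$ once $M\ge2$, taking $c\le 1/4$ suffices. This horizon is also at most $T_0=4NM$, so all lemmas stated with horizon $T_0$ apply. Run $\mathsf A$ against the scheduled disclosure oracle of \cref{def:scheduled}, feeding it only the censored value and gradient. The resulting scheduled run is itself a deterministic scheduled policy, so \cref{lem:marked} applies to it. Hence the Gaussian families and the event $E$ of \cref{lem:highmult} are defined, with $\Prob(E)\ge1-\zeta$. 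Choose $\zeta=1/3$. The multiplicity requirement in \cref{lem:highmult} depends only on $T_0,\eta,\delta,J,N,\zeta$ and not on the policy, so the hard distribution is fixed before $\mathsf A$ is chosen. This is exactly the quantifier order Yao's principle needs.

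On $E$, \cref{lem:firstexit} shows that the true transcript of $\mathsf A$ on $F$ coincides with the censored transcript through round $T_0$. In particular, every query issued by $\mathsf A$ in the true run is a scheduled query. Then \cref{lem:sequential} gives that no query at a round $t<\sigma_N$ is $\eps$-stationary. Its hypotheses are the weight event, which is item (i) of $E$, and the readout barrier of \cref{cor:freshbarrier}. \cref{lem:stationarity} is applicable there because $\good_{\rm wt}$ holds for every block. Since $\mathsf A$ makes at most $cNM<\sigma_N$ calls, it never queries an $\eps$-stationary point on $E$, and so it fails with probability at least $2/3$.

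If the proposition is also meant to cover the algorithm's output, one extra query at the output point handles this: any algorithm whose output is $\eps$-stationary yields an algorithm with one more call that queries it. The count $cNM+1$ is absorbed by shrinking $c$. Membership $F\in\cF_{1:2}$ holds for every realization by \cref{prop:smooth}, and this is needed afterwards for the reduction to randomized algorithms.

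The main obstacle is bookkeeping rather than new estimates. The event $E$ must be constructed along the \emph{scheduled} run, which is defined without reference to the true responses, and only then transferred to the true run. Otherwise a policy-dependent stopping argument would creep in. I would check carefully that the conditions on $m$ (via \cref{lem:markov}, through $c_M$) and the dimension condition $d_0>C(T_0+1)^2$ in \cref{lem:marked} are met by the chosen multiplicities. Both conditions are guaranteed by the $T_0^2$ term in \cref{lem:highmult}.
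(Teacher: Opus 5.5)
Your proposal is correct and follows essentially the same route as the paper. You fix the policy and build the good event of \cref{lem:highmult} along the scheduled run with policy-independent multiplicities, transfer it to the true run via \cref{lem:firstexit}, and invoke \cref{lem:sequential} (with \cref{lem:stationarity}) to exclude stationarity before round $1+Nm\ge cNM$, appending the output as one extra query. The differences are minor: you take $\zeta=1/3$ where the paper keeps slack at probability $5/6$, and the condition $m\le c_0\sqrt{\bar L/\gamma}$ comes from the choice of $c_M$, not from the $T_0^2$ multiplicity term as your last sentence suggests.
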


\begin{proof}
Choose the per-block failure probabilities in \cref{lem:blockmoments,lem:highmult} so that all good events hold simultaneously with probability at least $5/6$.  The multiplicities depend only on $N,M,T_0$ and the prescribed failure probabilities. The concentration estimates are uniform over deterministic policies, so the hard distribution is fixed independently of the policy and of any private random seed.

On the good event, \cref{lem:sequential} excludes stationarity at every query with index at most $Nm$.  Since $m$ is a fixed constant fraction of $M$, this is at least $cNM$.  Conversely, \cref{lem:stationarity} says that every $\eps$-stationary point must have all first $N-1$ gates fully open and must solve the final system to residual at most $\eps$.  Hence no queried point is $\eps$-stationary before $cNM$ calls.  An arbitrary unqueried terminal output can be appended as one final query, changing constants only.
\end{proof}

\begin{proof}[Proof of \cref{thm:main}]
The hard distribution is supported on $\cF_{1:2}^{(d)}(\Delta,L_1,L_2)$ by \cref{prop:smooth}.  By \cref{prop:distlower}, every deterministic algorithm with fewer than $cNM$ first-order calls succeeds with probability at most $1/3$ under this distribution.  Fixing each private seed gives a deterministic policy. Integrating its success probability over the seed leaves the distributional success probability at most $1/3$, contradicting a per-instance guarantee of at least $2/3$. This is the distributional direction of Yao's principle; the good event may depend on the policy, while the hard distribution and its probability bound do not.  The product estimate \eqref{eq:product} gives the claimed lower bound.  For the upper bound, apply Theorem~1 of \citet{LiLin2023JMLR} with $L=L_1$, $\rho=L_2$, and its accuracy parameter equal to $\eps/82$, since its output gradient norm is at most $82$ times that parameter. Its momentum condition is satisfied when $\eps\le cL_1^2/L_2$. Integer epoch lengths and the final epoch add at most $O(L_1^{1/2}L_2^{-1/4}\eps^{-1/4})$ calls, absorbed by the main bound when $\Delta L_2^{1/2}\eps^{-3/2}\ge C$. The descent summation only uses a finite lower bound on $f$, so replacing its notation $\min f$ by $\inf f$ requires no minimizer assumption. Each gradient evaluation is available from our oracle. This proves \eqref{eq:main-theta} with the existing deterministic upper bound.
\end{proof}

\section{Discussion and Limitations}

The result is a high-dimensional worst-case oracle lower bound.  The block dimension is deliberately large: multiplicities must satisfy the horizon-dependent concentration bounds and approximate the spectral design weights; optimizing this dependence is outside the scope of the paper.  Dimension-dependent rates can be different.  In particular, low-dimensional upper bounds may beat the dimension-free $\eps^{-7/4}$ rate in certain regimes, and a full characterization of $\Q(d,\eps)$ remains open.

The proof is specialized to the Hessian-Lipschitz case.  Zhou's deterministic construction treats all finite higher-order smoothness orders \citep{Zhou2026Sharp}.  Extending the present randomized cascade to general finite $p$ would likely require a $p$th-order analogue of the positive forcing and readout design.

Finally, the lower bound is exact-oracle and deterministic-function worst case.  It does not address stochastic gradient noise, finite-sum structure, average smoothness assumptions, or additional problem geometry.

\bibliographystyle{unsrtnat}
\bibliography{references}

\appendix

\section{Additional Proof Details}

We give explicit smooth primitives and the finite spectral-design argument, then clarify exact simulation and probability accounting for the passage from the censored transcript to the original value-gradient oracle.

\subsection[Smooth primitives]{Construction for \Cref{lem:primitives}}
Let $e(t)=\exp(-1/t)$ for $t>0$ and $e(t)=0$ for $t\le0$, and define
\[
 S(t):=\frac{e(t)}{e(t)+e(1-t)},\qquad \chi(t):=S(4t-1).
\]
The denominator is positive, and $S$ is smooth, nondecreasing, zero on $(-\infty,0]$, and one on $[1,\infty)$. Thus $\chi$ has the stated thresholds and bounded derivatives. Set
\[
 \vartheta(t):=\int_0^t[1-S(|q|-K_0)]\,dq.
\]
The integrand is identically one near zero, so the absolute value creates no loss of smoothness. It lies between zero and one and vanishes for $|q|\ge K_0+1$. This gives all saturation properties, with derivative bounds independent of $K_0\ge1$.

For the radial map, put $\rho(r)=\int_0^r[1-S(q-1)]\,dq$ for $r\ge0$ and
\[
 \mathcal R_B(z)=
 \begin{cases}
 B\rho(\|z\|/B)z/\|z\|,&z\ne0,\\
 0,&z=0.
 \end{cases}
\]
It equals $z$ for $\|z\|\le B$ and has norm at most $2B$. Smoothness at zero follows from this identity. Away from zero, derivatives of the unit radial vector have norms bounded by numerical multiples of $\|z\|^{-k}$; on the transition annulus $B\le\|z\|\le2B$, differentiation and scaling give $C_kB^{1-k}$. Beyond the annulus $\rho$ is constant and the same bounds follow from the unit-vector derivatives. These bounds are independent of dimension for each fixed $k\le4$.

\subsection[Finite spectral design]{Proof of \Cref{lem:dual}}
For a probability measure $\mu$ on $[\gamma,\bar L]$ and $p\in\mathcal P_{m-1}$, define
\[
 \Phi(p,\mu)=\int(\lambda^{-1}-p(\lambda))^2\,d\mu(\lambda).
\]
The space of probability measures on $[\gamma,\bar L]$ is compact and convex in the weak topology. For fixed $p$, $\Phi$ is continuous and affine in $\mu$; for fixed $\mu$, it is continuous and convex in the coefficients of $p$. Sion's minimax theorem, with the measure space as the compact side \citep{Sion1958Minimax}, therefore gives
\[
 \sup_\mu\inf_{p\in\mathcal P_{m-1}}\Phi(p,\mu)
 =\inf_{p\in\mathcal P_{m-1}}\sup_\mu\Phi(p,\mu)
 =\inf_p\|\lambda^{-1}-p\|_\infty^2\ge c/\gamma^2.
\]
No compact restriction of the polynomial coefficients is imposed. The infimum of the continuous functions $\Phi(p,\cdot)$ is upper semicontinuous, so a maximizing measure $\mu_*$ exists.

For completeness, finite support follows directly from convex geometry. The squared error depends only on the $2m+1$ coordinates of the continuous moment map
\[
 \psi(\lambda)=(\lambda^{-2},\lambda^{-1},1,\lambda,\ldots,\lambda^{2m-2}),
\]
where for $m=1$ the list stops at $1$. Its integral under $\mu_*$ belongs to the convex hull of the compact image of $\psi$. Carath\'eodory's theorem represents that integral as a convex combination of at most $2m+2$ image points. Delete zero weights and merge repeated points. The resulting positive atomic probability measure preserves $\Phi(p,\mu_*)$ for every $p$, proving \eqref{eq:dualL2}. This is the finite-function moment argument underlying Tchakaloff-type formulas \citep{BayerTeichmann2006}; it applies to the negative powers here without treating them as ordinary polynomial moments.

\subsection{Exact simulation after retirement}

The calendar oracle uses each block's matrix-vector responses from the start, its physical $b_s$ from $\tau_s$, and its physical $a_s$ only from $\sigma_s$. At retirement, the disclosed $U_s$ gives
\[
 a_s=U_s\frac{r(\Lambda)u_s}{\|r(\Lambda)u_s\|},\qquad
 \nabla v_s(z_s)=\frac{D\mathcal R_B(z_s)^\top a_s}{\alpha_0}.
\]
Thus every retained coupling in \eqref{eq:censored-function} can be differentiated exactly. Before retirement no attempt is made to recover $a_s$ from a scalar latent inner product; that outgoing coupling is omitted. \Cref{cor:freshbarrier,lem:firstexit} then prove that the omission changes neither value nor gradient along the coupled run on the good event.

\subsection{Why no union bound over adaptive queries is needed}

The adaptive algorithm may choose uncountably many possible queries as a function of its transcript.  The proof does not union bound over these queries.  Instead, \cref{lem:marked} embeds each block's queries before retirement into a random finite-dimensional latent span generated by $T_0$ Gaussian starts and its marked RHS branch. No span restriction is needed after full disclosure.  After this reduction, \cref{lem:highmult} controls every vector in the realized random subspace $\cK_m+\cV$ by a finite collection of eigenspace projection events.  This is why the multiplicity condition depends on $T_0,J,N$ but not on an $\eps$-net over query space.

\subsection{Good-event probability}

Apply \cref{lem:highmult} with $\zeta=1/6$, allocating at most $1/(12N)$ per block to each of the weight and projection events. A union bound gives simultaneous validity with probability at least $5/6$.  The remaining $1/6$ slack is absorbed into the constant-probability lower bound in \cref{prop:distlower}.  The hard distribution itself is unchanged on the bad event and remains supported on the target function class by \cref{prop:smooth}; only the certificate proving failure is lost.

\end{document}